\newtheorem{theorem}{Theorem}[section]
\newtheorem{lemma}[theorem]{Lemma}
\theoremstyle{definition}
\newtheorem{example}[theorem]{Example}
\theoremstyle{remark}
\newtheorem{remark}[theorem]{Remark}
\numberwithin{equation}{section}
\begin{document}

\title{On the Exponential  Stability of Switching-Diffusion Processes
with Jumps}

\author{Chenggui Yuan}

\author{Jianhai Bao}
\address{Department of Mathematics,
 Swansea University, Swansea SA2 8PP, UK}
\email{majb@swansea.ac.uk}

\subjclass[2000]{Primary 60H15; Secondary 60J28, 60J60}

\date{July 6, 2011 and, in revised form, August 3, 2011.}


\keywords{L\'{e}vy noise, maximal inequality, exponential martingale
inequality with jumps, sample Lyapunov exponent}

\begin{abstract}
In this paper we focus on the pathwise stability of mild solutions
for a class of
 stochastic partial differential equations which are driven by switching-diffusion processes with jumps. In comparison to the existing literature, we show that:
 (i) the criterion
 to guarantee pathwise stability does not rely on the moment stability of the system; (ii) the sample
Lyapunov exponent obtained is generally smaller than that of the
counterpart driven by a Wiener process; (iii) due to the Markovian
switching the overall system can become  pathwise exponentially
stable although some subsystems are not stable.
\end{abstract}

\maketitle


.

\section{Introduction}
Stochastic partial differential equations (SPDEs) have been widely
used to model  phenomena arising in many branches of science such as
ecology, economics, mechanics, biology and chemistry, e.g.,
Applebaum \cite{a09}, Chow \cite{chow07}, Da Prato and Zabczyk
\cite{dz92}, Liu \cite{l04}, Peszat and Zabczyk \cite{pz07}, and
Woyczy\'{n}ski \cite{w01}. Recently,  hybrid systems, in which
continuous dynamics are intertwined with discrete events, have also
been used to model many such systems. One of the distinct features
of hybrid systems is that the underlying dynamics are subject to
changes with respect to  certain configurations. For example,
consider a one-dimensional rod of length $\pi$ whose ends are
maintained at $0^\circ$ and whose sides are insulated. Assume that
there is an exothermic reaction taking place inside the rod with
heat being produced proportionally to the temperature. The
temperature $u$ in the rod may be modelled by (see, e.g.,
\cite{H78})
\begin{equation}\label{eq111}
\begin{cases}
\frac{\partial u}{\partial t}=\frac{\partial^2u}{\partial x^2}+cu,
\ \ \ \ \ \ \ \ t>0, \ \ x\in(0,\pi),\\
u(t,0)=u(t,\pi)=0,\\
u(0,x)=u_0(x),
\end{cases}
\end{equation}
where  $u=u(t,x)$ and $c$ is a constant dependent on the rate of
reaction. The system \eqref{eq111} will switch from one mode to
another in a random way when it experiences abrupt changes in its
structure and parameters caused by phenomena such as component
failures or repairs, changing subsystem interconnections, or abrupt
environmental disturbances. A hybrid system driven by a
continuous-time Markov chain can be applied to describe such a
situation. The system \eqref{eq111} under regime switching could  be
described by the following stochastic model
\begin{equation*}
\begin{cases}
\frac{\partial u}{\partial t}=\frac{\partial^2u}{\partial
x^2}+c(r(t))u,
\ \ \ \ \ \ \ \ t>0, \ \ x\in(0,\pi),\\
u(t,0)=u(t,\pi)=0,\\
u(0,x)=u_0(x), \ \ \ r(0)=r_0,
\end{cases}
\end{equation*}
where $r(t)$ is a right-continuous Markov chain with finite state
space $\mathbb{S}$ and $c:\mathbb{S}\rightarrow\mathbb{R}$. As a
second example, Li et al. \cite{llp09} discussed stochastic
age-dependent population equation with Markovian switching,
\begin{equation*}
\begin{cases}
\frac{\partial P}{\partial t}+\frac{\partial P}{\partial
a}=-\mu(r(t),
a)P +f(r(t),P)+g(r(t), P)\frac{dW(t)}{dt}, \ \ (t,a)\in Q,\\
\ \ P(0,a)=P_0(a), \ \ \ r(0)=r_0,\ \ \ \ \ \ \ \ \ \ \ \ \ \ \ \  \ \ \ \ \ \ \ \ \ \  \ \ \ \ \ \ \ \ a\in[0,A],\\
\ \ \ P(t,0)=\int_0^A\beta(t,a)P(t,a)da, \ \ \ \ \ \ \ \ \ \ \ \ \ \
\ \  \ \ \ \ \ \ \ \ \ \  \ \ \ \ \ \ \ t\in[0,T],
\end{cases}
\end{equation*}
where $T > 0, A > 0, Q:=(0, T) \times(0, A), P=P(t,a)$ is the
population density of age $a$ at time $t$, $r(t)$ is a
right-continuous Markov chain, $\mu(r(t), a)$ denotes the mortality
rate of age $a$ at time $t$, and $f(r(t),P)$ denotes the effects of
the external environment on the population system. For more
quantitative analysis of
 SPDEs  with Markovian regime switching, we refer to
Anabtawi and Ladde \cite{al00}, Anabtawi  and Sathananthan
\cite{as08}, Luo and Liu \cite{ll08}, and the references therein.
For the finite-dimensional case, we  refer to the monographs of Mao
and Yuan \cite{my06} and Yin and Zhu \cite{YZ}.

Non-Gaussian random processes also play an important role in
modelling stochastic dynamical systems, e.g., Applebaum \cite{a09},
{\O}ksendal  and Sulem \cite{os07}, and
 Peszat and Zabczyk \cite{pz07}. Typical examples of non-Gaussian stochastic processes are L\'{e}vy processes
and processes arising from Poisson random measures. The monograph
\cite{w01} describes a number of phenomena from fluid mechanics,
solid state physics, polymer chemistry, economic science, etc.,
which can be modelled  using non- Gaussian L\'{e}vy processes.

Moreover, one of the most important and interesting problems in the
analysis of SPDEs is their stability. Stability issues for SPDEs
with Wiener noise  are by now classical, see, e.g., \cite{clm01,
chow07, ich82, l04,ll08}, but comparable theories driven by jump
noise are not yet fully developed. Recent years have witnessed a
growing interest in this area: In  \cite{bty09} we investigated the
asymptotic stability in distribution of mild solutions for delay
equations using Lyapunov functions and the Yorsida approximation; by
energy inequality approach, moment stability and sample path
stability for variational solutions were discussed in \cite{hby10},
and almost sure exponential stability was studied in \cite{ll08}
provided that the mild solution is  moment exponentially stable. For
the finite-dimensional case, Applebaum and Siakalli \cite{as09}
provide sufficient conditions under which the solutions to
stochastic differential equations (SDEs) driven by L\'{e}vy noise
are stable in probability, almost surely and moment exponentially
stable. As a sequel, Applebaum and Siakalli \cite{as10} made some
first steps in the stochastic stabilization problems where the noise
source is a L\'{e}vy noise, i.e., a Brownian motion and an
independent Poisson random measure.

Most of the previous literature does not discuss the sample Lyapunov
exponents which are given explicitly by the parameters arising from
the jump-diffusion coefficients or stationary probability
distribution of  Markovian chain. In this paper, motivated by the
previous references,  we shall study the pathwise stability of mild
solutions for a class of
 SPDEs of the form
\begin{equation}\label{eq17}
\begin{split}
dX(t)&=[AX(t)+F(t,X(t),r(t))]dt+G(t,X(t),r(t))dW(t)\\
&\quad+\int_{\mathbb{Z}}\Phi(t,X(t^-),r(t),u)\tilde{N}(dt,du)
\end{split}
\end{equation}
on $t \geq 0$ with the initial data $X(0)=x_0\in H$ and $r(0)=r_0\in
\mathbb{S}$, where $X(t^-):=\lim_{s\uparrow t}X(s)$. More detailed
information on the parameters in Eq. \eqref{eq17} will be given in
Section $2$.

In comparison to the existing literature for the almost sure
exponential stability of solutions to SPDEs, our main result
(Theorem 3.1), has the following advantages: (i) the criterion
established does not rely on the moment exponential stability of the
system; (ii) the sample Lyapunov exponent is generally smaller than
that of the counterpart driven by a Wiener process; (iii) due to the
Markovian switching the overall system could become pathwise
exponentially stable, although some subsystems are not stable.

Our approach is  based on the Yosida approximation and a classical
Lyapunov function argument. Since mild solutions do not necessarily
have stochastic differentials, one cannot apply the It\^o formula
directly. To overcome this problem, we first apply the It\^o formula
to an approximating equation, and then investigate   the stability
properties of the mild solutions. This approach is  dependent on a
maximal inequality (Burkh\"older-Davis-Gundy inequality) for
stochastic convolutions with jumps.

\section{Preliminaries}
Let $(H,\langle\cdot,\cdot\rangle_H, \|\cdot\|_H)$ be a Hilbert
space and  $W(t)$ a cylindrical Wiener process on $H$ defined on
some filtered probability space $(\Omega, \mathcal {F}, \mathbb{P})$
equipped with a filtration ${\{\mathcal {F}_t}\}_{t\geq0}$
satisfying the usual conditions. Let $\mathbb{Z}$ be a vector space
endowed with a norm $|\cdot|$, $\mathcal {B}(\mathbb{Z})$ is the
Borel $\sigma$-algebra on $\mathbb{Z}$, and $\lambda(dx)$ a
$\sigma$-finite measure defined on $\mathcal {B}(\mathbb{Z})$. Let
$p=(p(t)),t\in D_p$, be a stationary $\mathcal {F}_t$-Poisson point
process on $\mathbb{Z}$ with characteristic measure $\lambda$.
Denote by $N(dt,du)$ the Poisson counting measure associated with
$p$, i.e., $N(t,\mathbb{Y})=\sum_{s\in D_p, s\leq
t}I_{\mathbb{Y}}(p(s))$ for $\mathbb{Y}\in\mathcal {B}(\mathbb{Z})$.
Let $\tilde{N}(dt,du):=N(dt,du)-dt\lambda(du)$ be the compensated
Poisson measure. Let $m$ be some positive integer and $\{r(t),t\in
\mathbb{R}_{+}\}$
  a right continuous irreducible Markov chain on
the probability space $\{\Omega,{\mathcal F},  \mathbb{P}\}$ taking
values in a finite state space $\mathbb{S}:=\{1,2,...,m\}$, with
generator $\Gamma=(\gamma_{ij})_{m\times m}$ given by
$$ \mathbb{P}(r(t + \Delta )=j|r(t)=i) = \left\{
\begin{array}{cc}
 \gamma_{ij} \Delta  + o(\Delta ),\ \ & \ \ {\rm  if}\ \ i \ne j, \\
 1 + \gamma_{ii} \Delta  + o(\Delta ),\ \ & \ \ {\rm  if}\ \ i = j, \\
 \end{array} \right.$$
where $\Delta>0$ and $\gamma_{ij}\geq 0$ is
 the transition rate from $i$ to $j$, if $i\ne j$; while $\gamma_{ii}=-\sum_{j\ne
 i}\gamma_{ij}$. For a mapping $\zeta:\mathbb{S}\rightarrow\mathbb{R}$, we
 write
$\zeta_i:=\zeta(i)$. We further assume that the Wiener process
$W(t),$ Poisson process $N(t,\cdot)$ and Markov chain $r( t)$ are
independent. For $T>0$ and $p>0$, let $D([0,T];H)$ be the family of
all c\`{a}dl\`{a}g paths from $[0,T]$ into $H$ with the uniform norm
and $L^p:=L^p(\Omega, \mathcal {F}, \mathbb{P};D([0,T];H)):=\{X:
\mathbb{E}\sup_{0\leq t\leq T}\|X(t)\|_H^p <\infty\}$.

\begin{remark}
{\rm  Since we have assume that the Markov chain $r(t)$ is
irreducible, it has a unique stationary probability distribution
$\pi:=(\pi_1,\pi_2,\ldots,\pi_m)\in\mathbb{R}^{1\times m}$ which can
be determined by solving
\begin{equation*}
\pi\Gamma=0 \ \ \ \mbox{ subject to } \ \ \ \sum_{j=1}^m\pi_j=1
\mbox{ and } \pi_j>0,\ \ \  \forall j\in\mathbb{S}.
\end{equation*}
}
\end{remark}

 Throughout the
paper we will assume:
\begin{enumerate}
\item[\textmd{({\it H1})}] $A:\mathcal
{D}(A)\rightarrow H$ is the infinitesimal generator of a
pseudo-contraction $C_0$-semigroup $S(t)$ on $H$, that is,
\begin{equation}\label{eq22}
\|S(t)\|\leq e^{\alpha t}, \ \ \ \ \ \ \mbox{ for some }\alpha\geq0.
\end{equation}
\end{enumerate}

\begin{enumerate}
\item[\textmd{({\it H2})}]
$F: [0,T]\times H\times\mathbb{S}\rightarrow H$, $G:[0,T]\times
H\times\mathbb{S}\rightarrow \mathcal {L}_2(H,H)$, the family of
Hilbert-Schmidt operators from $H$ into itself,  and
$\Phi:[0,T]\times H\times{\mathbb{Z}}\times\mathbb{S} \rightarrow
H$, satisfy Lipschitz and linear growth conditions, i.e., there
exist positive constants $L, \tilde L$ such that for arbitrary
$x,y\in H, i\in\mathbb{S}$
\begin{equation*}
\begin{split}
\| F(t,x,i)&-F(t,y,i)\|_H^2+\| G(t,x,i)-G(t,y,i)\|_{HS}^2\leq L\| x-y\|_H^2,\\
& \| F(t,x,i) \|_H^2+\| G(t,x,i)\|_{HS}^2\leq
\tilde{L}(1+\| x\|_H^2),\\
\end{split}
\end{equation*}
\begin{equation*}
\begin{split}
&\int_{\mathbb{Z}}\| \Phi(t,x,i,
u)-\Phi(t,y,i,u)\|_H^2\lambda(du) \leq L\| x-y\|_H^2,\\
&\int_{\mathbb{Z}}\| \Phi(t,x,i, u)\|_H^2\lambda(du)\leq
\tilde{L}(1+\| x\|_H^2).
\end{split}
\end{equation*}
\end{enumerate}
Moreover, we will need some slightly  stronger conditions on $\Phi$:
\begin{enumerate}
\item[\textmd{({\it H3})}] For $p\geq2$, there exists $\bar{L}>0$ such that
\begin{equation}\label{eq25}
\int_{\mathbb{Z}}\| \Phi(t,x,i,
u)-\Phi(t,y,i,u)\|_H^p\lambda(du)\leq \bar{L}\|x-y\|_H^p,\ \ \ \
x,y\in H, i\in\mathbb{S},
\end{equation}
and  $\Phi(t,0,i, u)=0$.
\end{enumerate}
\begin{enumerate}
\item[\textmd{({\it H$3^{\prime}$})}]
There exists an $L>0$ such that for arbitrary $x,y\in H,
i\in\mathbb{S}, u \in \mathbb{Z}$ and $p\geq2$
\begin{equation*}
\| \Phi(t,x,i, u)-\Phi(t,y,i,u)\|_H \leq L\| x-y\|_H|u| \mbox{ and }
\int_{\mathbb{Z}}|u|^p\lambda(du)<\infty.
\end{equation*}
\end{enumerate}
Clearly $(H3^{\prime})$ implies $(H3)$ and $\| \Phi(t,x,i, u)\|_H
\leq L\| x\|_H|u|$ whenever $\Phi(t,0,i, u)=0$.

In this paper we are mainly concerned with  mild solutions to
\eqref{eq17}. For the notion of mild solutions, we can refer to,
e.g., \cite[Definition 2.1]{mpr10}, for SPDEs driven by
multiplicative Poisson noise, and, with an obvious extension, we can
define such solutions for the case of switching-diffusion processes
with jumps.

\begin{lemma}\label{global existence}
{\rm Under conditions $(H1)$ and $(H2)$, Eq. \eqref{eq17} admits a
unique mild solution $X(t,x_0,r_0), t\in[0,T]$, in $L^2(\Omega,
\mathcal {F}, \mathbb{P};H)$.}
\end{lemma}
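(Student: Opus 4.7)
The plan is to construct the unique mild solution by a standard Picard iteration / Banach contraction argument, working on the space
\[
\mathcal{H}_T := \Bigl\{X : [0,T]\times\Omega\to H \text{ adapted, c\`adl\`ag with } \mathbb{E}\sup_{0\leq t\leq T}\|X(t)\|_H^2<\infty\Bigr\},
\]
equipped with the norm $\|X\|_T^2 = \mathbb{E}\sup_{0\leq t\leq T}\|X(t)\|_H^2$. On this space I define the solution map
\[
(\mathcal{T}X)(t) = S(t)x_0 + \int_0^t S(t-s)F(s,X(s),r(s))\,ds + \int_0^t S(t-s)G(s,X(s),r(s))\,dW(s) + \int_0^t\!\!\int_{\mathbb{Z}} S(t-s)\Phi(s,X(s^-),r(s),u)\tilde N(ds,du).
\]
The goal is to show that, on a short enough interval, $\mathcal{T}$ is a contraction, and that its unique fixed point can be extended to $[0,T]$.

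The first step is to show that $\mathcal{T}$ maps $\mathcal{H}_T$ into itself. Using \eqref{eq22}, the linear growth bounds in (H2), the Cauchy--Schwarz inequality for the drift term, and the Burkholder--Davis--Gundy-type maximal inequality for stochastic convolutions driven by $W$ and by the compensated Poisson measure $\tilde N$ (the one the paper explicitly relies on in its introduction), I get an estimate of the form
\[
\mathbb{E}\sup_{0\leq t\leq T}\|(\mathcal{T}X)(t)\|_H^2 \leq C_1(T)\bigl(1+\|x_0\|_H^2\bigr) + C_2(T)\,\|X\|_T^2,
\]
with $C_1(T),C_2(T)$ depending on $\alpha,\tilde L, T$ but independent of $r(\cdot)$ (the chain only enters through the index in $F,G,\Phi$, whose Lipschitz and growth constants are uniform in $i\in\mathbb{S}$ by (H2)).

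Next, for two processes $X,Y\in\mathcal{H}_T$ I apply the same three estimates --- Cauchy--Schwarz on the drift piece and the maximal/BDG inequality for the stochastic convolutions --- together with the Lipschitz part of (H2). Because the same Lipschitz constant $L$ is uniform in $i\in\mathbb{S}$, the randomness of $r(\cdot)$ causes no extra difficulty. This yields
\[
\|\mathcal{T}X - \mathcal{T}Y\|_t^2 \leq K\int_0^t \|X-Y\|_s^2\, ds, \qquad 0\leq t\leq T,
\]
for some constant $K = K(\alpha,L,T)$. From here either (i) choose $T_0$ small enough that $KT_0<1$, obtain contraction on $[0,T_0]$, and then iterate on $[T_0,2T_0],\dots$ to reach $[0,T]$; or (ii) iterate the inequality directly, yielding $\|\mathcal{T}^nX-\mathcal{T}^nY\|_T^2\leq (KT)^n/n!\,\|X-Y\|_T^2$, so that $\mathcal{T}^n$ is a contraction for large $n$, giving a unique fixed point in $\mathcal{H}_T$. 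Either route produces a unique $X\in\mathcal{H}_T$ with $\mathcal{T}X=X$, which is the required mild solution.

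The main technical obstacle is the maximal inequality for the Poisson stochastic convolution $\int_0^tS(t-s)\Phi\,d\tilde N$: such a convolution is not a martingale, so one cannot directly invoke BDG. This is precisely where the stochastic-convolution maximal inequality flagged in the introduction is used --- either through a factorization method or via a Kotelenez/Ichikawa-type argument based on the pseudo-contractivity \eqref{eq22}. Once this inequality is in hand, the rest of the argument is routine. The Markovian switching contributes no additional obstruction, because the coefficients are controlled uniformly in $i\in\mathbb{S}$ and $r(\cdot)$ is independent of the driving noises, so one can condition on a realization of $r$ if desired and all of the above estimates go through unchanged.
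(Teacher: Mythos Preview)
Your argument is correct, but it proceeds differently from the paper. The paper exploits the fact that almost every sample path of $r(\cdot)$ is a right-continuous step function with only finitely many jumps on $[0,T]$: it partitions $[0,T]$ by the successive jump times $0=\tau_0<\tau_1<\cdots<\tau_k$, and on each random interval $[\tau_j,\tau_{j+1})$ the chain is frozen at the value $r(\tau_j)\in\mathbb{S}$, so the equation becomes an ordinary jump-SPDE with a fixed (non-switching) index. One then cites the existing fixed-point theorem of Peszat--Zabczyk on each piece and concatenates. Your approach instead runs the contraction argument directly on the full equation, treating $r(\cdot)$ simply as a random adapted coefficient and relying on the uniformity in $i\in\mathbb{S}$ of the Lipschitz and growth constants in (H2). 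The paper's route has the advantage of reducing immediately to an off-the-shelf result and making the role of the switching transparent; yours is more self-contained, makes explicit where the stochastic-convolution maximal inequality (the paper's Lemma~\ref{B-D-G}) enters, and would carry over unchanged to, say, a countable state space for $r$ provided the constants in (H2) remain uniform. Both approaches are standard; they simply reflect the two usual ways of handling Markovian switching---either freeze between jumps or absorb the chain into the coefficients.
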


\begin{proof}
 Since the proof of Lemma \ref{global existence} can be done by combining classical Banach fixed-point theorem argument
\cite[Theorem 9.29, p164]{pz07} with stopping time techniques
\cite[Theorem 3.13, p89]{my06}, we will only give a sketch of the
argument. Recall that almost every sample path of $r(\cdot)$ is a
right-continuous step function with a finite number of sample jumps
on $[0,T]$. So for almost every $\omega\in\Omega$ there is a finite
$k=k(\omega)$ such that $0=\tau_0<\tau_1<\cdots<\tau_{k}\geq T$  and
\begin{equation*}
r(t)=r(\tau_k) \mbox{ on } \tau_k\leq t<\tau_{k+1} \mbox{ for }
\forall k\geq0.
\end{equation*}
We first consider Eq. \eqref{eq17} on $t\in[0,\tau_1]$ which becomes
\begin{equation}\label{eq007}
\begin{split}
dX(t)&=[AX(t)+F(t,X(t),r_0)]dt+G(t,X(t),r_0)dW(t)\\
&\quad+\int_{\mathbb{Z}}\Phi(t,X(t^-),r_0,u)\tilde{N}(dt,du)
\end{split}
\end{equation}
with the initial data $X(0)=x_0\in H$ and $r(0)=r_0\in \mathbb{S}$.
By \cite[Theorem 9.29, p164]{pz07} Eq. \eqref{eq007} admits a unique
mild solution $X(t,x_0,r_0)$ for  $t\in[0,\tau_1]$ which belongs to
$L^2(\Omega, \mathcal {F}, \mathbb{P};H)$. Next consider Eq.
\eqref{eq1} on $t\in[\tau_1,\tau_2]$ which becomes
\begin{equation}\label{eq008}
\begin{split}
dX(t)&=[AX(t)+F(t,X(t),r(\tau_1))]dt+G(t,X(t),r(\tau_1))dW(t)\\
&\quad+\int_{\mathbb{Z}}\Phi(t,X(t^-),r(\tau_1),u)\tilde{N}(dt,du)
\end{split}
\end{equation}
with the initial data $X(\tau_1,x_0,r_0)$ and $r(\tau_1)$. Again by
\cite[Theorem 9.29, p164]{pz07} Eq. \eqref{eq008} admits a unique
mild solution $X(t,X(\tau_1,x_0,r_0),r(\tau_1))$ for
$t\in[\tau_1,\tau_2]$ which belongs to $L^2(\Omega, \mathcal {F},
\mathbb{P};H).$ The proof can then  be completed by repeating this
procedure.
\end{proof}

Let $U: \mathbb{R}_+\times H\times
\mathbb{S}\rightarrow\mathbb{R}_+$ be a $C^{1,2}$-function. For
$t\geq0, x\in \mathcal {D}(A)$ and $i\in\mathbb{S}$, define an
operator
\begin{equation*}
\begin{split}
\mathcal {L}U(t,x,i)&:=U_t(t,x,i)+\langle Ax+F(t,x,i),U_x(t,x,i)\rangle_H\\
&\quad+\sum_{j=1}^m\gamma_{ij}U(t,x,j)+\frac{1}{2}\mbox{trace}(U_{xx}(t,x,i)G(t,x,i)G^*(t,x,i))\\
&\quad+\int_{\mathbb{Z}}[U(t,x+\Phi(t,x,i,u),i)-U(t,x,i)-\langle
U_x(t,x,i), \Phi(t,x,i,u)\rangle_H]\lambda(du).
\end{split}
\end{equation*}
Similarly to \cite[Theorem 1.45, p48]{my06} and \cite [Theorem D.2,
p392] {pz07}, for  a strong solution $X(t)$ of Eq. \eqref{eq17}, we
have the following It\^o formula:
\begin{equation}\label{ito}
\begin{split}
&U(t,X(t),r(t))\\
&=U(0,x_0,r_0)+\int_0^t\mathcal {L}U(s,X(s),r(s))ds \\
&\quad+\int_0^t\langle U_x(s,X(s),r(s)),G(s,X(s),r(s))dW(s)\rangle_H \\
&\quad+\int_0^t\int_{\mathbb{Z}}[U(s,X(s^{-})+\Phi(s,X(s^{-}),r(s),u),r(s))-U(s,X(s^{-}),r(s))]\tilde{N}(ds,du)\\
&\quad+\int_0^t\int_\mathbb{R}[U(s,X(s^{-}),
r_0+h(r(s),\ell))-U(s,X(s^{-}),r(s))\mu(ds,d\ell),
\end{split}
\end{equation}
where  $\mu (ds,d\ell)$ is a Poisson random measure with intensity
$ds \times m(d\ell)$, in which $m$ is the Lebesgue measure on
$\mathbb{R}$. For more details on the function $h$ and the
martingale measure $\mu(ds, d\ell)$,  see, e.g.,
\cite[p46-48]{my06}.

Since mild solutions do not necessarily have stochastic
differentials, one cannot apply the It\^o formula directly. Instead,
we introduce the Yosida-approximation system
\begin{equation}\label{eq24}
\begin{split}
dX^l(t)=&[AX^l(t)+R(l)F(t,X^l(t),r(t))]dt+R(l)G(t,X^l(t),r(t))dW(t)\\
&+\int_{\mathbb{Z}}R(l)\Phi(t,X^l(t^-),r(t),u)\tilde{N}(dt,du)
\end{split}
\end{equation}
with the initial data $X^l(0)=R(l)x_0$ and $r(0)=r_0$. Here
$l\in\rho(A)$, the resolvent set of $A$, and $R(l):=l
R(l,A):=l(lId-A)^{-1}$, where
  $Id$ is the identity operator from $H$ into itself.

\begin{theorem}\label{approximation}
{\rm Under conditions $(H1)-(H3)$, Eq. \eqref{eq24} has a unique
strong solution $X^l(t)$ in $L^p,p\geq2$. Moreover, $X^l(t)$
converges to the mild solution $X(t)$ of Eq. \eqref{eq17} in $L^p$
as $l\rightarrow\infty$, i.e.,
\begin{equation*}
\lim\limits_{l\rightarrow\infty}\mathbb{E}\left(\sup\limits_{t\in[0,T]}\|X^l(t)-X(t)\|_H^p\right)=0.
\end{equation*}
In particular, there exists a subsequence, still denoted by
$X^l(t)$, such that $X^l(t)\rightarrow X(t)$ almost surely as
$l\rightarrow\infty$ uniformly in $[0, T]$. }
\end{theorem}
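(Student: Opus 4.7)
The plan is to split the argument into three stages: construction of the strong solution $X^l$, an $L^p$ bound for $X^l-X$, and extraction of an almost surely convergent subsequence. For the first stage, note that $R(l)$ is bounded on $H$, takes values in $\mathcal{D}(A)$, and commutes with the semigroup $S(t)$. Consequently the coefficients $R(l)F$, $R(l)G$, $R(l)\Phi$ inherit the Lipschitz and linear-growth conditions of (H2) with constants depending on $l$, and under (H3) also inherit the $L^p$-Lipschitz estimate \eqref{eq25}. A contraction argument in $L^p(\Omega;D([0,T];H))$ on the successive switching intervals $[\tau_k,\tau_{k+1}]$ of $r(\cdot)$, exactly as in Lemma \ref{global existence} but with $L^p$ replacing $L^2$, produces a unique mild solution $X^l$ to \eqref{eq24}. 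Since $R(l)x_0$ and the coefficients all lie in $\mathcal{D}(A)$, and $R(l)$ commutes with $S(t)$, the mild formula forces $X^l(t)\in\mathcal{D}(A)$ for every $t$, so the equation is satisfied in the strong sense.

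For the second stage, I would write both processes in mild form (using $S(t)R(l)=R(l)S(t)$), subtract, and decompose, for each coefficient $f\in\{F,G,\Phi\}$,
\begin{equation*}
\begin{split}
S(t-s)R(l)f(X^l)-S(t-s)f(X)&=S(t-s)R(l)\bigl[f(X^l)-f(X)\bigr]\\
&\quad+S(t-s)\bigl(R(l)-Id\bigr)f(X).
\end{split}
\end{equation*}
The first piece is handled by (H2)--(H3): after $\sup_{s\le t}$ and $\mathbb{E}$, combined with a factorization-type maximal inequality for the Wiener stochastic convolution and a Burkh\"older--Davis--Gundy-type inequality for the Poisson stochastic convolution (the key tool flagged in the introduction), it contributes a term bounded by $C_T\int_0^t\mathbb{E}\sup_{u\le s}\|X^l(u)-X(u)\|_H^p\,ds$. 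The second piece is independent of $X^l$; since $(R(l)-Id)y\to 0$ in $H$ for every $y\in H$ and the integrands are dominated via the linear-growth bound by an element of $L^p$ built from $X$, dominated convergence drives this contribution to zero as $l\to\infty$. Gronwall's inequality then yields the $L^p$ convergence, and Markov's inequality with Borel--Cantelli along a subsequence $l_k$ satisfying $\mathbb{E}\sup_{[0,T]}\|X^{l_k}-X\|_H^p\le 2^{-k}$ produces the claimed almost sure uniform convergence.

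The main obstacle will be the Poisson stochastic-convolution estimate: because the semigroup $S(t-s)$ sits inside the integral, one cannot directly apply the classical BDG inequality to $\int_0^t\int_{\mathbb{Z}}S(t-s)\Phi(\cdot)\tilde{N}(ds,du)$. It is precisely the maximal inequality for jump convolutions, together with the $L^p$-Lipschitz hypothesis (H3) tailored to close the Gronwall loop for $p\ge 2$, that makes the scheme go through.
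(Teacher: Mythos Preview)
Your proposal is correct and follows essentially the same approach as the paper: the paper's proof is a one-line reference to \cite[Proposition 2.4]{ll08} combined with Lemma~\ref{B-D-G}, and you have accurately spelled out the Yosida-approximation argument that this reference encodes, correctly identifying the maximal inequality for Poisson stochastic convolutions (Lemma~\ref{B-D-G}) as the crucial ingredient needed to close the $L^p$ Gronwall loop under hypothesis (H3).
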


\begin{proof}
This can be proven by following the argument in \cite[Proposition
2.4]{ll08} and using the result of Lemma \ref{B-D-G} below.
\end{proof}

\begin{lemma}\label{B-D-G}
{\rm ( \cite[Theorem 4.4]{BHZ} or \cite[Proposition 3.3]{mpr10})
Assume that $\Phi:\Omega\times
\mathbb{R}_+\times\mathbb{Z}\rightarrow H$, is a progressively
measurable process, and for  $p\geq2$
\begin{equation}\label{10}
\mathbb{E}\int_0^T\int_{\mathbb{Z}}\|\Phi(s,u)\|_H^p\lambda(du)ds<\infty.
\end{equation}
If $S(t),t\in[0,T]$, is a pseudo-contraction $C_0$-semigroup such
that \eqref{eq22}, then
\begin{equation}\label{eq33}
\mathbb{E}\left[\sup\limits_{0\leq t\leq
T}\left\|\int_0^t\int_{\mathbb{Z}}S(t-s)\Phi(s,u)\tilde{N}(ds,du)\right\|_H^p\right]\leq
C_p\mathbb{E}\int_0^T\int_{\mathbb{Z}}\|\Phi(s,u)\|_H^p\lambda(du)ds,
\end{equation}
where $C_p$ is a positive constant dependent on $p,\alpha,T$. }
\end{lemma}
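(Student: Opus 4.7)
The plan is to adapt the Da Prato--Kwapień--Zabczyk factorization method to the jump-noise setting. The basic difficulty is that the process
$$M(t) := \int_0^t\int_{\mathbb{Z}} S(t-s)\Phi(s,u)\tilde N(ds,du)$$
is \emph{not} a martingale in $t$, because the integrand depends on the upper limit through the semigroup factor $S(t-s)$, so Doob's inequality cannot be applied directly.

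First I would reduce to the contraction case by writing $S(t)=e^{\alpha t}T(t)$ with $\|T(t)\|\leq 1$ and substituting $e^{-\alpha s}\Phi(s,u)$ for $\Phi(s,u)$, which pushes the factor $e^{p\alpha T}$ into the constant $C_p$. Then, for a parameter $\beta\in(1/p,1/2)$ (a nonempty interval precisely when $p>2$), I would use the Euler beta identity
$$\int_s^t (t-r)^{\beta-1}(r-s)^{-\beta}\,dr = \frac{\pi}{\sin(\pi\beta)}$$
together with a stochastic Fubini argument, justified by assumption \eqref{10}, to obtain the factorization
$$M(t) = \frac{\sin(\pi\beta)}{\pi}\int_0^t (t-r)^{\beta-1}T(t-r)\, Y_\beta(r)\,dr,$$
where $Y_\beta(r) := \int_0^r\int_{\mathbb{Z}}(r-s)^{-\beta}T(r-s)\Phi(s,u)\tilde N(ds,du)$ is, for each fixed $r$, a genuine martingale endpoint.

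Applying H\"older's inequality with conjugate exponents $p,q=p/(p-1)$ --- using $\|T\|\leq 1$ and the requirement $\beta>1/p$ to make $\int_0^t(t-r)^{(\beta-1)q}\,dr$ finite --- yields the pathwise estimate
$$\sup_{0\leq t\leq T}\|M(t)\|_H^p \leq C(p,T,\beta)\int_0^T\|Y_\beta(r)\|_H^p\,dr.$$
Taking expectations, exchanging order of integration by Fubini, and applying the standard Burkh\"older-type inequality for jump stochastic integrals in Hilbert spaces to the endpoint $Y_\beta(r)$ (which controls the $p$-th moment in terms of a $p/2$-power of the quadratic variation, integrable in $r$ thanks to $2\beta<1$) then delivers \eqref{eq33}.

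The main technical obstacle is the exponent bookkeeping: the factorization step requires $\beta>1/p$, while integrability of the $Y_\beta$-term in $r$ requires $\beta<1/2$, leaving the borderline $p=2$ inaccessible to this scheme. That case would be handled by a complementary Yosida-approximation argument (in the spirit of Theorem~\ref{approximation}), applying It\^o's formula to $\|M^l(t)\|_H^2$ for the strong-solution approximant $M^l$ driven by the bounded generator $A_l$, invoking Doob's inequality and It\^o isometry at the strong level, and passing to the limit $l\to\infty$ via the uniform convergence $S_l(t)\to S(t)$ on $[0,T]$.
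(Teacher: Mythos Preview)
The paper does not give its own proof of this lemma; it is simply quoted from \cite[Theorem 4.4]{BHZ} and \cite[Proposition 3.3]{mpr10}. So there is nothing to compare against here beyond noting that those references do \emph{not} proceed via factorization: Marinelli--Pr\'ev\^ot--R\"ockner use the Sz\H{o}kefalvi-Nagy unitary dilation of the contraction semigroup, writing $T(t)=PU(t)\iota$ with $U$ a $C_0$-group on a larger Hilbert space, so that $\int_0^t T(t-s)\Phi\,\tilde N = PU(t)\int_0^t U(-s)\iota\Phi\,\tilde N$ and the supremum is dominated by the supremum of a genuine $\tilde H$-valued martingale, to which the Bichteler--Jacod/BDG inequality applies directly.

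Your factorization scheme, by contrast, has a genuine gap at the step ``applying the standard Burkh\"older-type inequality \dots\ (which controls the $p$-th moment in terms of a $p/2$-power of the quadratic variation, integrable in $r$ thanks to $2\beta<1$)''. For Poisson integrals the BDG inequality bounds $\mathbb{E}\|Y_\beta(r)\|_H^p$ by $\mathbb{E}[Y_\beta]_r^{p/2}$, where the bracket is $\int_0^r\int_{\mathbb{Z}}(r-s)^{-2\beta}\|T(r-s)\Phi\|_H^2\,N(ds,du)$, an integral against the \emph{random} Poisson measure. Passing from this to an expression in the compensator $\lambda(du)ds$ for $p>2$ (Kunita, Novikov, or Bichteler--Jacod) unavoidably produces \emph{two} terms,
\[
\mathbb{E}\Bigl(\int_0^r\!\!\int_{\mathbb{Z}}(r-s)^{-2\beta}\|\Phi\|_H^2\,\lambda(du)ds\Bigr)^{p/2}
\ +\
\mathbb{E}\int_0^r\!\!\int_{\mathbb{Z}}(r-s)^{-p\beta}\|\Phi\|_H^p\,\lambda(du)ds.
\]
The second term, when integrated in $r$ over $[0,T]$, contains the kernel $\int_s^T (r-s)^{-p\beta}dr$, which is finite only for $p\beta<1$, i.e.\ $\beta<1/p$. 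But your H\"older step in the deterministic factorization integral already forced $\beta>1/p$. The two constraints are incompatible, and this is precisely why the Da Prato--Kwapie\'n--Zabczyk factorization does not transfer verbatim to jump convolutions; the dilation argument of \cite{mpr10} was introduced exactly to circumvent this obstruction. Your separate treatment of $p=2$ via Yosida approximation is fine, but it does not rescue the case $p>2$.
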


\section{A Criterion for Sample Path Stability}
In this section we give a criterion for sample path stability of
mild solutions to Eq. \eqref{eq17}.
\begin{theorem}\label{asymptotic}
Let  $(H1), (H2)$ and ($H3^{\prime}$) hold. Assume that the solution
of Eq. \eqref{eq17} is such that $X(t)\neq0$ a.s. for all $t\geq0$
and $i\in\mathbb{S}$ provided $x_0\neq0$ a.s. For $U\in
C^{1,2}(\mathbb{R}_+\times H\times \mathbb{S};\mathbb{R}_+)$, assume
further that  there exist constants $c_2>c_1>0, c_3>0,
p>0,\alpha_i,\rho_i\in\mathbb{R}, \beta_i,\delta_i\geq0$ such that for $(t,x,i)\in\mathbb{R}_+\times \mathcal {D}(A)\times\mathbb{S}$\\
\begin{enumerate}
\item[\textmd{(i)}] $c_1\|x\|_H^p\leq U(t,x,i)\le c_2\|x\|_H^p,$ \  $\|U_x(t, x, i)\|_H\|x\|_H + \|U_{xx}(t, x, i)\|\|x\|_H^2\le c_3\|x\|_H^p;$\\
\item[\textmd{(ii)}] $\mathcal {L}U(t,x,i)\leq\alpha_iU(t,x,i)$;\\
\item[\textmd{(iii)}] $\Theta U(t,x,i):=
\|G^*(t,x,i)U_x(t,x,i)\|_H^2
\geq\beta_iU^2(t,x,i)$;\\
\item[\textmd{(iv)}] For $\Psi(t,x,i,j):=U(t,x,j)/U(t,x,i)$,
\begin{equation*}
\sum\limits_{j=1}^m\gamma_{ij}(\ln\Psi(t,x,i,j)-\Psi(t,x,i,j))\leq\rho_i;
\end{equation*}
\item[\textmd{(v)}] For
$\Lambda(t,x,i,u):=U(t,x+\Phi(t,x,i,u),i)/U(t,x,i)$,
\begin{equation*} \int_{\mathbb{Z}}\left[\ln
\Lambda(t,x,i,u)-\Lambda(t,x,i,u)+1\right]\lambda(du):=J(t,x,i)\leq-\delta_i,
\end{equation*}
and for some $\epsilon\in(0,\frac{1}{2}]$
\begin{equation*}
\zeta:=\limsup_{t\rightarrow\infty}\frac{1}{t}\Xi(t)<\infty,
\end{equation*}
where
\begin{equation*}
 \Xi(t):=\int_0^t\int_{\mathbb{Z}}[(\ln
\Lambda(s,x,i,u))^2+\Lambda^\epsilon(s,x,i,u)]\lambda(du)ds;
\end{equation*}
\item[\textmd{(vi)}] For $\Upsilon(t, x, r_0,
i,\ell):=U(t,x,r_0+h(i,\ell))/U(t,x, i)$ and some
$\tilde{\epsilon}\in(0,\frac{1}{2}]$
\begin{equation*}
\eta:=\limsup\limits_{t\rightarrow\infty}\frac{1}{t}\Pi(t)<\infty,
\end{equation*}
where
\begin{equation*}
\Pi(t):=\int_0^t\int_{\mathbb{Z}}[(\ln \Upsilon(s, x, r_0,
i,\ell))^2+\Upsilon^{\tilde{\epsilon}}(s, x, r_0,
i,\ell)]m(d\ell)ds.
\end{equation*}
\end{enumerate}
Then the mild solution of Eq. \eqref{eq17} has the property
\begin{equation*}
\limsup\limits_{t\rightarrow\infty}\frac{1}{t}\ln(\|X(t)\|_H)\leq-\dfrac{1}{p}\sum\limits_{i=1}^m\pi_i\left(\frac{1}{2}\beta_i+\delta_i-\alpha_i-\rho_i\right),\
\ \ \ \mbox{a.s.}
\end{equation*}
In particular, the mild solution of Eq. \eqref{eq17} is almost
surely exponentially stable provided that
\begin{equation*}
\sum\limits_{i=1}^m\pi_i\left(\frac{1}{2}\beta_i+\delta_i-\alpha_i-\rho_i\right)>0.
\end{equation*}
\end{theorem}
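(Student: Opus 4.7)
The plan is to apply It\^o's formula to $\ln U(t, X^l(t), r(t))$ along the Yosida approximation $X^l(t)$ of Theorem \ref{approximation}, bound the resulting drift using hypotheses (ii)--(v), show the three arising martingales grow sublinearly almost surely via exponential martingale inequalities combined with Borel--Cantelli, and transfer the bound back to the mild solution $X(t)$ by passing $l \to \infty$. The non-degeneracy assumption $X(t) \neq 0$ a.s.\ combined with (i) guarantees $U > 0$ along trajectories, so $\ln U$ is well defined.

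The key computation is that the generator of $\ln U$ reorganises as
\begin{equation*}
\mathcal{L}(\ln U)(t,x,i) = \frac{\mathcal{L}U}{U} - \frac{\|G^*(t,x,i)U_x(t,x,i)\|_H^2}{2U^2} + \sum_{j=1}^m\gamma_{ij}\bigl(\ln\Psi - \Psi\bigr) + \int_\mathbb{Z}\bigl(\ln\Lambda - \Lambda + 1\bigr)\lambda(du),
\end{equation*}
which follows from $\nabla_x\ln U = U_x/U$, $\nabla_x^2 \ln U = U_{xx}/U - (U_x\otimes U_x)/U^2$, and a rearrangement of the compensator terms in the definition of $\mathcal{L}U$ using $\gamma_{ii} = -\sum_{j\ne i}\gamma_{ij}$. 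Hypotheses (ii)--(v) bound these four pieces by $\alpha_i$, $-\beta_i/2$, $\rho_i$, and $-\delta_i$ respectively, so after integration,
\begin{equation*}
\ln U(t, X^l(t), r(t)) \leq \ln U(0, R(l)x_0, r_0) + \int_0^t\bigl(\alpha_{r(s)} - \tfrac12\beta_{r(s)} + \rho_{r(s)} - \delta_{r(s)}\bigr)\,ds + M_1^l(t) + M_2^l(t) + M_3^l(t),
\end{equation*}
where $M_1^l, M_2^l, M_3^l$ are the continuous Wiener martingale, the compensated Poisson martingale (integrand $\ln\Lambda$), and the compensated Markov-switching martingale (integrand $\ln\Upsilon$).

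Next I would show $M_k^l(t)/t \to 0$ a.s.\ for $k=1,2,3$. For the continuous $M_1^l$, the standard exponential supermartingale $\exp\{\theta M_1^l - (\theta^2/2)\langle M_1^l\rangle\}$ combined with Doob's maximal inequality and Borel--Cantelli on the intervals $[n,n+1]$ delivers sublinearity once one controls $\langle M_1^l\rangle_t$ using (H2), (i) and (iii). For the pure-jump pieces $M_2^l$ and $M_3^l$ the analogous tool is the supermartingale property of $\exp\{\int f\,\tilde N(ds,du) - \int(e^f - 1 - f)\lambda(du)\,ds\}$, i.e.\ the exponential martingale inequality with jumps; the compensator bounds $\zeta, \eta < \infty$ in (v)--(vi) are precisely what turns the Borel--Cantelli step into an $O(\log n)$ overshoot at time $n$, hence sublinear after dividing by $t$.

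Finally, dividing by $t$ and sending $t \to \infty$, the ergodic theorem for the irreducible chain $r(\cdot)$ converts the time-averaged drift to $\sum_i \pi_i(\alpha_i - \beta_i/2 + \rho_i - \delta_i)$ a.s., while the lower bound $c_1\|x\|_H^p \leq U$ from (i) converts $\limsup t^{-1}\ln U$ into the desired bound on $\limsup t^{-1}\ln \|X^l(t)\|_H$. Applying the subsequential a.s.\ uniform convergence $X^l \to X$ on $[0,T]$ from Theorem \ref{approximation}, via a diagonal argument over $T = n$, then transfers the bound to the mild solution $X(t)$. The main obstacle I anticipate is the jump exponential-martingale step: one must choose the exponent $\theta$ compatibly with $\epsilon, \tilde\epsilon \in (0,\tfrac12]$ so that both the ``quadratic'' compensator $(\ln\Lambda)^2$, $(\ln\Upsilon)^2$ and the ``moment'' compensator $\Lambda^\epsilon$, $\Upsilon^{\tilde\epsilon}$ appearing in $\Xi(t)$ and $\Pi(t)$ are simultaneously controlled, and to verify that these bounds survive the limit $l \to \infty$ uniformly so that the conclusion passes from $X^l$ to $X$.
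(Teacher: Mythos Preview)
Your overall strategy---It\^o on $\ln U$ along the Yosida approximation, exponential martingale inequalities plus Borel--Cantelli, then the ergodic theorem---matches the paper. But there is a genuine gap in how you handle the Wiener martingale $M_1$, and a related ordering issue.

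You propose first to invoke (iii) and bound the second drift piece by $-\beta_i/2$, and \emph{afterwards} to show $M_1(t)/t\to0$ a.s.\ via a separate exponential martingale estimate. The latter would require an upper bound on $\langle M_1\rangle_t/t=\frac{1}{t}\int_0^t\Theta U/U^2\,ds$, which the hypotheses do not supply: (iii) is a \emph{lower} bound on $\Theta U$, and combining (H2) with (i) only yields $\Theta U/U^2\lesssim 1+\|X(s)\|_H^{-2}$, uncontrolled as $X(s)\to0$. The paper's fix is to apply the exponential martingale inequality \emph{before} using (iii). The inequality (in fact the combined Wiener-plus-jump version, \cite[Theorem~5.2.9]{a09}) produces a compensator $\frac{\theta}{2}\int_0^t\Theta U/U^2\,ds$, which merges with the already-present $-\frac{1}{2}\int_0^t\Theta U/U^2\,ds$ in the It\^o drift to give $-\frac{1-\theta}{2}\int_0^t\Theta U/U^2\,ds$; only \emph{then} is (iii) invoked to bound this above by $-\frac{1-\theta}{2}\int_0^t\beta_{r(s)}\,ds$, and letting $\theta\downarrow0$ at the very end recovers the full coefficient $-\beta_i/2$. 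No separate sublinearity estimate for $M_1$ is needed.

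Relatedly, the paper passes to the limit $l\to\infty$ \emph{immediately after} writing the It\^o identity (via dominated convergence, using $(H3')$ and (i)), so that the exponential martingale and Borel--Cantelli steps are performed once, on martingales that no longer depend on $l$; this sidesteps the uniformity-in-$l$ problem you flag at the end of your proposal. Your anticipation of the jump-compensator step is accurate: the paper Taylor-expands $\Lambda^\theta=1+\theta\ln\Lambda+\tfrac{\theta^2}{2}(\ln\Lambda)^2\Lambda^\xi$ for some $\xi\in[0,\theta]$, splits on $\{\Lambda\le1\}$ versus $\{\Lambda>1\}$, and uses $\ln x\le\tfrac{4}{\epsilon}(x^{\epsilon/4}-1)$ on the second set to obtain the bound involving $(\ln\Lambda)^2+\Lambda^\epsilon$ appearing in (v); the same argument handles $\Upsilon$ and (vi).
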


\begin{remark}
 By the fundamental inequality
\begin{equation*}
\ln(1+x)\leq x \ \ \mbox{ for } x\geq0,
\end{equation*}
it is easy to observe that the first assumption in (v) is
reasonable. On the other hand, for $x\in H$ and $i,j\in\mathbb{S}$,
 (vi) is also true provided that there exist constants $c_4, c_5>0$
such that
\begin{equation*}
c_4\leq U(t,x,j)/U(t,x,i)\leq c_5.
\end{equation*}
There are many functions possessing this property, e.g., for $x\in
H, i\in\mathbb{S}$,  $U(t,x,i)=\sigma_i\|x\|_H^2$ with $\sigma_i>0$.
For this Lyapunov function, we have
$\Psi(t,x,i,j)=\sigma_j/\sigma_i$ and then  (iv) also holds.
\end{remark}

\noindent{\bf Proof of Theorem \ref{asymptotic}.} As we mentioned
previously, one cannot apply the It\^o formula to the mild solutions
directly.  we first apply the It\^o formula to the approximation
equation \eqref{eq24}, then use   Theorem \ref{approximation} to
investigate  the stability of the mild solutions.  More precisely,
applying the It\^o formula \eqref{ito} to $\ln U(t,x,i)$ with
respect to $X^l(t),t\geq0$, where $X^l(t)$ denotes the strong
solution of Eq. \eqref{eq24}, we obtain
\begin{equation*}
\begin{split}
\ln U(t,X^l(t),r(t)) &=\ln U(0,R(l)x_0,r_0)+\int_0^t\frac{\mathcal
{L}U(s,X^l(s),r(s))}{U(s,X^l(s),r(s))}ds\\
&\quad+\int_0^t\sum_{j=1}^m\gamma_{r(s)j}\left[\ln \Psi(s,X^l(s),i,j)-\Psi(s,X^l(s),i,j)\right]ds\\
&\quad+\int_0^t\int_{\mathbb{Z}}\ln\Big(
\frac{U(s,X^l(s^-)+R(l)\Phi(s,X^l(s^-),r(s),u),r(s))}{U(s,X^l(s^-),r(s))}\Big)\tilde{N}(ds,du)\\
&\quad+\int_0^t\int_\mathbb{R}\ln \Upsilon(t, X^l(s^-), r_0,
r(s),\ell)\mu(ds,d\ell)\\
 &\quad+J_1(t, l)+J_2(t,l)+J_3(t,l),
\end{split}
\end{equation*}
where
\begin{equation*}
\begin{split}
J_1(t, l)&: =\frac{1}{2}\int_0^t \mbox{trace} \bigg[\bigg(\frac{U_{xx}(s,X^l(s),r(s))}{U(s,X^l(s),r(s))}-\frac{U_{x}(s,X^l(s),r(s))\otimes U_{x}(s,X^l(s),r(s))}{U^2(s,X^l(s),r(s))}\bigg)\\
&\ \ \ \ \ \ \ \ \ \ \  \ \ \ \ \ \ \ \ \ \ R(l)G(s,X^l(s),r(s))(R(l)G(s,X^l(s),r(s)))^*\bigg]ds\\
&\quad-\frac{1}{2}\int_0^t \frac{\mbox{trace}\Big[U_{xx}(s,X^l(s),r(s))G(s,X^l(s),r(s))(G(s,X^l(s),r(s)))^*\Big]}{U(s,X^l(s),r(s))}ds\\
J_2(t,l)&:=\int_0^t\frac{\langle U_x(s,X^l(s),r(s)), (R(l)-I)F(s,X^l(s),r(s))\rangle_H}{U(s,X^l(s),r(s))}ds,\\
J_3(t,l)&:=\int_0^t\int_{\mathbb{Z}}\Big[\ln\Big(
\frac{U(s,X^l(s)+R(l)\Phi(s,X^l(s),r(s),u),r(s))}{U(s,X^l(s),r(s))}\Big)\\
&\quad\ \ \ \ \ \ \ \ \ \ \   -\frac{\langle
U_x(s,X^l(s),r(s)),(R(l)-I)\Phi(s,X^l(s),r(s),u),r(s))}{U(s,X^l(s),r(s))}\\
&\quad\ \ \ \ \ \ \ \ \ \ \  -\frac{U(s,X^l(s)+\Phi(s,X^l(s),r(s),u),r(s))}{U(s,X^l(s),r(s))}+1\Big]\lambda(du)ds.\\
\end{split}
\end{equation*}
By Theorem \ref{approximation}, ($H3^{\prime}$), (i) and the
Dominated Convergence Theorem, we have almost surely
\begin{equation*}
\begin{split}
&\lim_{l \rightarrow \infty}J_1(t, l)=-\frac{1}{2}\int_0^t \frac{\Theta U(s,X(s),r(s))}{U(s,X(s),r(s))}d s, \ \ \ \  \lim_{l \rightarrow \infty}J_2(t, l)=0,\\
&\lim_{l \rightarrow \infty}J_3(t, l)=\int_0^t J(s,X(s),r(s))ds.
\end{split}
\end{equation*}
If we let $l \rightarrow \infty,$ then,
\begin{equation*}
\begin{split}
\ln U(t,X(t),r(t)) &=\ln
U(0,x_0,r_0)+\int_0^tJ(s,X(s),r(s))ds\\
&\quad+\int_0^t\frac{\mathcal
{L}U(s,X(s),r(s))}{U(s,X(s),r(s))}ds-\frac{1}{2}\int_0^t\frac{\Theta U(s,X(s),r(s))}{U^2(s,X(s),r(s))}ds\\
&\quad+\int_0^t\frac{\langle
U_x(s,X(s),r(s)),g(s,X(s),r(s))dW(s)\rangle_H}{U(s,X(s),r(s))}\\
&\quad+\int_0^t\sum_{j=1}^m\gamma_{r(s)j}\left[\ln \Psi(s,X(s),i,j)-\Psi(s,X(s),i,j)\right]ds\\
&\quad+\int_0^t\int_{\mathbb{Z}}\ln\Lambda(s,X(s^-),r(s),u)\tilde{N}(ds,du)\\
&\quad+\int_0^t\int_\mathbb{R}\ln \Upsilon(s, X(s^-), r_0,
r(s),\ell)\mu(ds,d\ell).
\end{split}
\end{equation*}
By the exponential martingale inequality with jumps \cite[Theorem
5.2.9, p291]{a09}, for any $T, \theta,\nu>0$
\begin{equation*}
\begin{split}
\mathbb{P}\Big\{&\omega:\sup\limits_{0\leq t\leq
T}\Big[\int_0^t\frac{\langle
U_x(s,X(s),r(s)),g(s,X(s),r(s))dW(s)\rangle_H}{U(s,X(s),r(s))}-\frac{\theta^2}{2}\int_0^t\frac{\Theta U(s,X(s),r(s))}{U^2(s,X(s),r(s))}ds\\
&+\int_0^t\int_{\mathbb{Z}}\ln\Lambda(s,X(s^-),r(s),u)\tilde{N}(ds,du)\\
&-\frac{1}{\theta}\int_0^t\int_\mathbb{Z}\Big[\Lambda^\theta(s,X(s),r(s),u)-1-\theta
\ln \Lambda(s,X(s),r(s),u)\Big]\lambda(du)ds\Big]>\nu\Big\}\\
&\leq e^{-\theta\nu},
\end{split}
\end{equation*}
and, if we denote $\tilde{\Upsilon}(t,\ell):=\Upsilon(t, x, r_0,
i,\ell)$, then
\begin{equation*}
\begin{split}
\mathbb{P}\Big\{\omega:\sup\limits_{0\leq t\leq
T}\Big[&\int_0^t\int_\mathbb{R}\ln \tilde{\Upsilon}(s, \ell)\mu(ds,d\ell)\\
&-\frac{1}{\theta}\int_0^t\int_{\mathbb{R}}\Big[\tilde{\Upsilon}^\theta(s,\ell)-1-\theta\ln
\tilde{\Upsilon}(s, \ell)\Big]m(d\ell)ds\Big]>\nu\Big\}\leq
e^{-\theta\nu}.
\end{split}
\end{equation*}
Taking $ T=n, \  \nu=2\theta^{-1}\ln n, \ n=1,2,\ldots, $ for
$\theta\in(0,\frac{\epsilon\wedge\tilde{\epsilon}}{2}]$ and applying
the  Borel-Cantelli Lemma, we see that   there exists an
$\Omega_0\subseteq\Omega$ with $\mathbb{P}(\Omega_0)=1$ such that
for any $\omega\in\Omega_0$ there exists an integer
$n_0=n_0(\omega)>0$ if $n\geq n_0$
\begin{equation*}
\begin{split}
&\int_0^t\frac{\langle
U_x(s,X(s),r(s)),g(s,X(s),r(s))dW(s)\rangle_H}{U(s,X(s),r(s))}+\int_0^t\int_{\mathbb{Z}}\ln\Lambda(s,X(s^-),r(s),u)\tilde{N}(ds,du)\\
&\leq2\theta^{-1}\ln
n+\frac{\theta^2}{2}\int_0^t\frac{\Theta U(s,X(s),r(s))}{U^2(s,X(s),r(s))}ds\\
&\quad+\frac{1}{\theta}\int_0^t\int_\mathbb{Z}\Big[\Lambda^\theta(s,X(s),r(s),u)-1-\theta
\ln \Lambda(s,X(s),r(s),u)\Big]\lambda(du)ds
\end{split}
\end{equation*}
and
\begin{equation*}
\begin{split}
\int_0^t\int_\mathbb{R}\ln
\tilde{\Upsilon}(s,\ell)\mu(ds,d\ell)\leq2\theta^{-1}\ln
n+\frac{1}{\theta}\int_0^t\int_{\mathbb{R}}\Big[\tilde{\Upsilon}^{\theta}(s,\ell)-1-\theta\ln
\tilde{\Upsilon}(s,\ell)\Big]m(d\ell)ds
\end{split}
\end{equation*}
for $0\leq t\leq n$. Hence for $\omega\in\Omega_0$, $0\leq t\leq n$
and $ n\geq n_0$
\begin{equation*}
\begin{split}
\ln U(t,X(t),r(t)) &\leq\ln U(0,x_0,r_0)+4\theta^{-1}\ln
n+\int_0^tJ(s,X(s),r(s))ds\\
&\quad+\int_0^t\frac{\mathcal
{L}U(s,X(s),r(s))}{U(s,X(s),r(s))}ds-\frac{1-\theta}{2}\int_0^t\frac{\Theta U(s,X(s),r(s))}{U^2(s,X(s),r(s))}ds\\
&\quad+\int_0^t\sum_{j=1}^m\gamma_{r(s)j}\left[\ln \Psi(s,X(s),i,j)-\Psi(s,X(s),i,j)\right]ds\\
&\quad+\frac{1}{\theta}\int_0^t\int_\mathbb{Z}\Big[\Lambda^\theta(s,X(s),r(s),u)-1-\theta
\ln \Lambda(s,X(s),r(s),u)\Big]\lambda(du)ds\\
&\quad+\frac{1}{\theta}\int_0^t\int_{\mathbb{R}}\Big[\tilde{\Upsilon}^{\theta}(s,\ell)-1-\theta\ln
\tilde{\Upsilon}(s,\ell)\Big]m(d\ell)ds.
\end{split}
\end{equation*}
This, together with (ii)-(v), yields that for $\omega\in\Omega_0$,
$0\leq t\leq n$ and $ n\geq n_0$
\begin{equation}\label{eq1}
\begin{split}
\ln U(t,X(t),r(t)) &\leq\ln U(0,x_0,r_0)+4\theta^{-1}\ln
n\\
&\quad+\int_0^t[\alpha(r(s))-\frac{1-\theta}{2}\beta(r(s))-\delta(r(s))+\rho(r(s))]ds\\
&\quad+\frac{1}{\theta}\int_0^t\int_\mathbb{Z}\Gamma(s,X(s),y,r(s),\theta)\lambda(dy)ds\\
&\quad+\frac{1}{\theta}\int_0^t\int_{\mathbb{R}}\widetilde\Theta(s,\ell,\theta)m(d\ell)ds\\
&:=I_1(t)+I_2(t)+I_3(t)+I_4(t),
\end{split}
\end{equation}
where
\begin{equation*}
\Gamma(s,x,i,u,\theta):=\Lambda^\theta(s,x,i,u)-1-\theta \ln
\Lambda(s,x,i,u)
\end{equation*}
and
\begin{equation*}
\widetilde\Theta(s,\ell,\theta):=\tilde{\Upsilon}^{\theta}(s,\ell)-1-\theta\ln
\tilde{\Upsilon}(s,\ell).
\end{equation*}
Using Taylor's series expansion, for
$\theta\in(0,\frac{\epsilon\wedge\tilde{\epsilon}}{2}]$ we see that
\begin{equation*}
\Lambda^\theta(s,x,i,u)=1+\theta \ln
\Lambda(s,x,i,u)+\frac{\theta^2}{2}(\ln\Lambda(t,x,i,u))^2\Lambda^{\xi}(t,x,i,u),
\end{equation*}
where $\xi$ lies between $0$ and $\theta$. Hence
\begin{equation*}
\begin{split}
I_3(t)&=\dfrac{\theta}{2}\int_0^t\int_\mathbb{Z}(\ln\Lambda(s,X(s),r(s),u))^2\Lambda^{\xi}(s,X(s),r(s),u)\lambda(du)ds\\
&=\dfrac{\theta}{2}\int_0^t\int_{0<\Lambda\leq1}(\ln\Lambda(s,X(s),r(s),u))^2\Lambda^{\xi}(s,X(s),r(s),u)\lambda(du)ds\\
&\quad+\dfrac{\theta}{2}\int_0^t\int_{\Lambda>1}(\ln\Lambda(s,X(s),r(s),u))^2\Lambda^{\xi}(s,X(s),r(s),u)\lambda(du)ds.
\end{split}
\end{equation*}
Noting that, for $0\leq\xi\leq\frac{\epsilon}{2}$,
$\Lambda^{\xi}\leq1$ if $0<\Lambda\leq1$,
$\Lambda^{\xi}\leq\Lambda^{\frac{\epsilon}{2}}$ if $\Lambda\geq1$,
and recalling the inequality
 \begin{equation*} \ln x\leq
\frac{4}{\epsilon}(x^{\frac{\epsilon}{4}}-1)\ \  \mbox{ for }
x\geq1,
\end{equation*}
we  obtain
\begin{equation*}
I_3(t)\leq\frac{\theta}{2}\int_0^t\int_\mathbb{Z}\Big[(\ln\Lambda(s,X(s),r(s),u))^2+\frac{16}{\epsilon^2}
\Lambda^\epsilon(s,X(s),r(s),u)\Big]\lambda(du)ds.
\end{equation*}
Similarly,
\begin{equation*}
I_4(t)\leq\frac{\theta}{2}\int_0^t\int_{\mathbb{R}}\Big[(\ln\Upsilon(s,\ell))^2+\frac{16}{\tilde{\epsilon}^2}\Upsilon^{\tilde{\epsilon}}(s,\ell)\Big]m(d\ell)ds.
\end{equation*}
Hence, by (i) for $\omega\in\Omega_0$, $n-1\leq t\leq n$ and $ n\geq
n_0+1$
\begin{equation*}
\begin{split}
\frac{1}{t}\ln(\|X(t)\|_H)&\leq-\frac{\ln\rho}{pt}+\frac{1}{pt}\Big[\ln
U(0,x_0,r_0)+4\theta^{-1}\ln
n+\frac{8\theta}{(\epsilon\wedge\tilde{\epsilon})^2} (\Xi(t)+\Pi(t))\\
&\quad+\int_0^t[\alpha(r(s))-\frac{1-\alpha}{2}\beta(r(s))-\delta(r(s))+\rho(r(s))]ds\Big].
\end{split}
\end{equation*}
Taking into account the ergodic property of  Markovian chains, e.g.,
\cite[Theorem 3.8.1, p126]{n97}, and combining  (v) with (vi), we
have almost surely
\begin{equation*}
\begin{split}
\limsup_{t\rightarrow\infty}\frac{1}{t}\ln(\|X(t)\|_H)&\leq\frac{1}{p}\left[\frac{8\theta}{(\epsilon\wedge\tilde\epsilon)^2}
(\zeta+\eta)+\sum\limits_{i=1}^m\pi_i\left(\alpha_i-\frac{1-\theta }{2}\beta_i-\delta_i+\rho_i\right)\right],\\
\end{split}
\end{equation*}
and the conclusion follows by the arbitrariness of  $\theta$.

Now we give an example to demonstrate Theorem \ref{asymptotic}.

\begin{example}
{\rm Let $r(t)$ be a right-continuous Markov chain taking values in
$\mathbb{S}=\{1,2\}$ with the generator
$\Gamma=(q_{ij})_{2\times2}$:
\begin{equation*}
-q_{11}=q_{12}=1,\ \ \ \ \ -q_{22}=q_{21}=q>0.
\end{equation*}
The unique stationary probability distribution of the Markov chain
$r(t)$ is
\begin{equation*}
\pi=(\pi_1,\pi_2)=(q/(1+q),1/(1+q)).
\end{equation*}
Let $f,g:\mathbb{R}\times\mathbb{S}\rightarrow\mathbb{R}$ be
Lipschitz continuous in the first argument and satisfy linear growth
conditions. Assume that  there exist constants $b_i\in\mathbb{R},
d_i>0, i=1,2$, such that for $x\in\mathbb{R}$
\begin{equation}\label{eq11}
2xf(x,i)+g^2(x,i)\leq b_ix^2,
\end{equation}
and
\begin{equation}\label{eq12}
xg(x,i)\geq d_i^{\frac{1}{2}}x^2.
\end{equation}
For  $i=1,2$ let
\begin{equation*}
\begin{split}
&\delta_i:=\int_0^\infty[\gamma^2_i(y)+2\gamma_i(y)-2\ln(1+\gamma_i(y))]\lambda(dy)>0,\\
&m_i:=\int_0^\infty[2\gamma_i(y)-2\ln(1+\gamma_i(y))]\lambda(dy).
\end{split}
\end{equation*}
Assume further that
\begin{equation}\label{eq14}
\int_0^\infty[(\ln(1+\gamma_i(y)))^2+\gamma_i^2(y)]\lambda(dy)<\infty,
\ \ \ \ i=1,2.
\end{equation}

Consider the following equation
\begin{equation}\label{eq29}
\begin{split}
&dX(t)=\left[AX(t)+f(X(t),r(t))\right]dt+g(X(t),r(t))dW(t)\\
&\quad\ \ \ \ \ \ \ \ \  +\int_0^\infty\gamma(r(t),y)X(t^-)\tilde{N}(dy,dt), \ \ t>0, x\in(0,\pi);\\
&X(0,x)=u_0(x), \ \ x\in(0,\pi);\ \ \ X(t,0)=X(t,\pi)=0, \ \ t\geq0.
\end{split}
\end{equation}
In this example, set $H:=L^2([0,\pi]), A=\frac{\partial }{\partial
x}(a(x)\frac{\partial}{\partial x})$ with domain $\mathcal {D}(A)$
satisfying the boundary conditions above. We let $a(x)$ be a
measurable function defined on $[0,\pi]$ such that
\begin{equation}\label{eq18}
0<\nu\leq a(x),
\end{equation}
for some positive constant $\nu$.

Let $U(t,u,i):=\lambda_i\|u\|_H^2, u\in H, i=1,2$, where
$\lambda_1=1$ and $\lambda_2$ is a positive constant which will be
determined later. Note from \eqref{eq11}, \eqref{eq18} and
Poincar\'e's inequality that for $u\in \mathcal {D}(A)$
\begin{equation*}
\begin{split}
\mathcal {L}U(t,u,1)&=2\langle Au+f(u,1),
u\rangle_H+\|g(u,1)\|_H^2\\
&\quad+\int_0^\infty\gamma_1^2(y)\lambda(dy)\|u\|_H^2+q_{11}\lambda_1\|u\|_H^2+q_{12}\lambda_2\|u\|_H^2\\
&\leq\left[-2\nu+b_1+\int_0^\infty\gamma_1^2(y)\lambda(dy)+\lambda_2-1\right]U(t,u,1)\\
&:=\alpha_1 U(t,u,1),
\end{split}
\end{equation*}
and similarly
\begin{equation*}
\begin{split}
\mathcal {L}U(t,u,2)&\leq\Big[-2\nu+b_2+\int_0^\infty\gamma_2^2(y)\lambda(dy)+q\Big(\frac{1}{\lambda_2}-1\Big)\Big]U(t,u,2)\\
&:=\alpha_2 U(t,u,2).
\end{split}
\end{equation*}
By the definition of $U$, it is easy to see that
\begin{equation*}
\rho_1=1-\lambda_2 +\ln\lambda_2\mbox{ and }
\rho_2=q\Big(1-\frac{1}{\lambda_2}-\ln\lambda_2\Big).
\end{equation*}
From \eqref{eq12}, it follows that
\begin{equation*}
\Theta U(t,u,i)\geq
d_i\|u\|_H^4=\frac{d_i}{\lambda_i^2}U^2(t,u,i):=\beta_i U^2(t,u,i).
\end{equation*}
Moreover, (v) follows from \eqref{eq14} and (iv), and (vi) holds due
to the definition of $U$. Thus, by Theorem \ref{asymptotic} we
arrive at
\begin{equation*}
\begin{split}
\limsup_{t\rightarrow\infty}\frac{1}{t}\ln(\|X(t)\|_H)&\leq-\frac{\vartheta}{2(1+q)},
\ \ \mbox{ a.s., }
\end{split}
\end{equation*}
where
\begin{equation*}
\vartheta:=q\Big(\frac{d_1}{2}+m_1+2\nu-b_1\Big)
+\frac{d_2}{2\lambda_2^2}+m_2+2\nu-b_2.
\end{equation*}
In particular, let
\begin{equation*}
\frac{d_1}{2}+m_1+2\nu-b_1<0,
\end{equation*}
and choose $\lambda_2>0$ such that
\begin{equation*}
\frac{d_2}{2\lambda_2^2}+m_2+2\nu-b_2>0.
\end{equation*}
Then Eq. \eqref{eq29} is almost surely exponentially stable whenever
\begin{equation*}
0<q<-\Big(\frac{d_2}{2\lambda_2^2}+m_2+2\nu-b_2\Big)\Big/\Big(\frac{d_1}{2}+m_1+2\nu-b_1\Big).
\end{equation*}
}
\end{example}

\section{Linear Switching-diffusion SPDEs with Jumps}
In this section to demonstrate that the results obtained in Theorem
\ref{asymptotic} are sharp,  we will discuss a class of linear
switching-diffusion SPDEs with jumps.

For a bounded domain $\mathcal {O}\subset\mathbb{R}^n$ with
$C^{\infty}$ boundary $\partial\mathcal {O}$, let $H:=L^2(\mathcal
{O})$ denote the family of all real-valued square integrable
functions, equipped with the usual inner product $\langle
f,g\rangle_H:=\int_{\mathcal {O}}f(x)g(x)dx, f,g\in H$ and norm
$\|f\|_H:=\left(\int_{\mathcal {O}}f^2(x)dx\right)^{\frac{1}{2}},
f\in H$. Let $A:=\sum_{i=1}^n\frac{\partial^2}{\partial x^2}$ be the
classical Laplace operator  defined on the Sobolev space  $\mathcal
{D}(A):= H^1_0(\mathcal {O})\cap H^2(\mathcal {O})$, where
$H^m(\mathcal {O}),m=1,2$, consist of functions of $H$ whose
derivatives $D^{\alpha}u$, in the sense of distributions, of order
$|\alpha|\leq m$ are in $H$. It is well known that there exists an
orthonormal basis of $H$, $\{e_n\}_{n\geq1}, n=1,2,\ldots$,
satisfying
\begin{equation}\label{eq00}
e_n\in \mathcal {D}(A),\ \ \ \ -Ae_n=\lambda_ne_n.
\end{equation}
Thus, for any $f\in H$, we can write
\begin{equation*}
f=\sum\limits_{n=1}^{\infty}f_ne_n, \mbox{ where } f_n=\langle f,
e_n\rangle.
\end{equation*}

Consider the following SPDE driven by a switching-diffusion process
with jumps
\begin{equation}\label{eq2}
\begin{cases}
\ \ dX(t)=(A X(t)+\bar{\alpha}(r(t)) X(t))dt+\bar{\beta}(r(t))
X(t)dW(t)\\
\ \ \ \ \ \ \ \ \ \
\quad+\int_0^\infty\bar{\gamma}(r(t),y)X(t^-)\tilde{N}(dt,dy),
  x\in\mathcal {O}, \ t > 0,\\
\ X(t,x)=0,  x\in\partial\mathcal {O},\ t>0,\\
\ X(0,x)=u^0(x),  x\in\mathcal {O} \mbox{ and } r(0)=r_0.
\end{cases}
\end{equation}
Here $\bar{\alpha},\bar{\beta}:\mathbb{S}\rightarrow\mathbb{R},
\bar{\gamma}:\mathbb{S}\times[0,\infty)\rightarrow\mathbb{R}$, $W$
is a real-valued Wiener process on the probability space
$\{\Omega,{\mathcal F}, \mathbb{P}\}$.

Applying Theorem \ref{asymptotic} we can deduce the following
conclusion.

\begin{theorem}\label{linear}
{\rm Assume that for $i\in \mathbb{S}, y\in(0,\infty)$
\begin{equation}\label{eq5}
\bar{\gamma}_i(y):=\bar{\gamma}(i,y)>-1,
  \ \ \ \max_{1\leq i\leq
m}\int_0^\infty\bar{\gamma}^2_i(y)\lambda(dy)<\infty,
\end{equation}
and
\begin{equation}\label{eq6}
\max_{1\leq i\leq
m}\int_0^\infty(\ln(1+\bar{\gamma}_i(y)))^2\lambda(dy)<\infty.
\end{equation}
Then  Eq. \eqref{eq2} has the property
\begin{equation*}
\limsup\limits_{t\rightarrow\infty}\frac{1}{t}\ln(\|X(t)\|_H)\leq-\lambda_1+\sum\limits_{j=1}^{m}\pi_j\left(\bar{\alpha}_j-\frac{1}{2}\bar{\beta}_j^2+\int_0^\infty[\ln(1+\bar{\gamma}_i(y))-\bar{\gamma}_i(y)]\lambda(dy)\right)
\quad a.s.
\end{equation*}
In particular, the solution  of Eq. \eqref{eq2} is almost surely
exponentially stable if
\begin{equation*}
\lambda_1>\sum\limits_{j=1}^{m}\pi_j\left(\bar{\alpha}_j-\frac{1}{2}\bar{\beta}_j^2+\int_0^\infty[\ln(1+\bar{\gamma}_i(y))-\bar{\gamma}_i(y)]\lambda(dy)\right).
\end{equation*}
}
\end{theorem}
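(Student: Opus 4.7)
The plan is to deduce Theorem \ref{linear} directly from Theorem \ref{asymptotic} by choosing a simple Lyapunov function that ignores the switching component, namely
\begin{equation*}
U(t,x,i):=\|x\|_H^2,\qquad p=2.
\end{equation*}
Because all three coefficients of Eq.~\eqref{eq2} are linear in $x$, $(H1)$ is immediate (the Dirichlet Laplacian generates a contraction semigroup), $(H2)$ is clear, and the relevant conclusions about $\Phi(t,x,i,u)=\bar\gamma_i(u)x$ needed in the proof of Theorem \ref{asymptotic} follow from \eqref{eq5}. Pathwise nonvanishing of $X(t)$ follows from the linearity of \eqref{eq2}, since solutions can be written as an explicit Doléans--Dade-type exponential multiplying the deterministic heat semigroup.

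The bulk of the argument will be the identification of the constants $\alpha_i,\beta_i,\delta_i,\rho_i$ appearing in conditions (i)--(vi) of Theorem \ref{asymptotic}. First, since $U$ is independent of $i$, $\sum_j\gamma_{ij}U(t,x,j)=0$, so the generator reduces to
\begin{equation*}
\mathcal{L}U(t,x,i)=2\langle Ax,x\rangle_H+\bigl(2\bar\alpha_i+\bar\beta_i^2+\textstyle\int_0^\infty\bar\gamma_i^2(y)\lambda(dy)\bigr)\|x\|_H^2.
\end{equation*}
Using the spectral decomposition \eqref{eq00}, $\langle Ax,x\rangle_H\le -\lambda_1\|x\|_H^2$ on $\mathcal{D}(A)$, which yields (ii) with
\begin{equation*}
\alpha_i=-2\lambda_1+2\bar\alpha_i+\bar\beta_i^2+\int_0^\infty\bar\gamma_i^2(y)\lambda(dy).
\end{equation*}
Direct computation gives $\Theta U(t,x,i)=4\bar\beta_i^2\|x\|_H^4=4\bar\beta_i^2 U^2$, so $\beta_i=4\bar\beta_i^2$ in (iii). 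Since $\Psi(t,x,i,j)\equiv 1$, condition (iv) holds with $\rho_i=0$. For (v), $\Lambda(t,x,i,y)=(1+\bar\gamma_i(y))^2$, so that
\begin{equation*}
J(t,x,i)=\int_0^\infty\bigl[2\ln(1+\bar\gamma_i(y))-2\bar\gamma_i(y)-\bar\gamma_i^2(y)\bigr]\lambda(dy)=:-\delta_i.
\end{equation*}

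Next I would check the finiteness of $\zeta$ and $\eta$. For $\zeta$ the integrand in $\Xi(t)$ is bounded by a multiple of $(\ln(1+\bar\gamma_i(y)))^2+(1+\bar\gamma_i(y))^{2\epsilon}$, which is $\lambda$-integrable thanks to \eqref{eq5}--\eqref{eq6} once $\epsilon$ is chosen small (splitting the integral over $\{\bar\gamma_i\le 0\}$ and $\{\bar\gamma_i>0\}$ and using $(1+t)^{2\epsilon}\le 1+C\bar\gamma_i^2$ on the latter); since the integrand is deterministic and uniformly bounded in $i\in\mathbb{S}$, $\zeta<\infty$. For $\eta$, the $i$-independence of $U$ forces $\Upsilon\equiv 1$ on the effective support of $h(i,\cdot)$, which is a bounded subset of $\mathbb{R}$ of Lebesgue measure $\sum_{j\ne i}\gamma_{ij}$, so $\Pi(t)\le Ct$ and $\eta<\infty$. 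Finally, substituting these constants into the conclusion of Theorem \ref{asymptotic} gives, after cancelling $\int\bar\gamma_i^2\lambda(dy)$ and using $\beta_i/2-\bar\beta_i^2=\bar\beta_i^2$,
\begin{equation*}
-\frac{1}{2}\sum_{i=1}^m\pi_i\!\left(\tfrac{1}{2}\beta_i+\delta_i-\alpha_i-\rho_i\right)=-\lambda_1+\sum_{i=1}^m\pi_i\!\left(\bar\alpha_i-\tfrac{1}{2}\bar\beta_i^2+\int_0^\infty[\ln(1+\bar\gamma_i(y))-\bar\gamma_i(y)]\lambda(dy)\right),
\end{equation*}
which is exactly the asserted upper bound on the sample Lyapunov exponent, and the stability criterion follows at once.

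The main obstacle I anticipate is the bookkeeping in verifying $\zeta<\infty$: one must control $(1+\bar\gamma_i(y))^{2\epsilon}$ near $\bar\gamma_i(y)=-1$ (where $\Lambda\downarrow 0$ and $\ln\Lambda\to-\infty$) as well as for large $\bar\gamma_i(y)$. A convenient choice is $\epsilon\in(0,1/2]$ small enough that the elementary inequality $(1+t)^{2\epsilon}\le 1+2\epsilon t+C_\epsilon t^2$ on $t\ge -1$ together with the already-assumed second-moment and squared-logarithm integrability conditions \eqref{eq5}--\eqref{eq6} gives a uniform bound; everything else is algebra.
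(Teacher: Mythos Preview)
Your proposal is correct and follows essentially the same route as the paper: choose $U(t,x,i)=\|x\|_H^2$, compute $\alpha_i=-2\lambda_1+2\bar\alpha_i+\bar\beta_i^2+\int_0^\infty\bar\gamma_i^2\,\lambda(dy)$, $\beta_i=4\bar\beta_i^2$, $\delta_i=\int_0^\infty[2\bar\gamma_i-2\ln(1+\bar\gamma_i)+\bar\gamma_i^2]\,\lambda(dy)$, note $\Lambda=(1+\bar\gamma_i)^2$ and $\Upsilon\equiv1$, and invoke Theorem~\ref{asymptotic}. Your treatment of the finiteness of $\zeta$ is in fact more detailed than the paper's, which simply cites \eqref{eq5}--\eqref{eq6}; one minor caution is that with $\Upsilon\equiv1$ the quantity $\Pi(t)$ as literally defined is infinite (since $\Upsilon^{\tilde\epsilon}=1$ on all of $\mathbb{R}$), but this is harmless because the corresponding martingale term in the proof of Theorem~\ref{asymptotic} vanishes identically, so condition~(vi) is not actually needed in this degenerate case.
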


\begin{proof}
Under  \eqref{eq5}, by Lemma \ref{global existence} Eq. \eqref{eq2}
admits a unique global mild solution. Let $U(u):=\|u\|_H^2, u\in H$,
and set for $t\geq0, u\in H, i\in\mathbb{S}$ and $y\in\mathbb{Z}$
\begin{equation*}
A(t,u,i):=Au+\bar{\alpha}_iu,\ \  g(t,u,i):=\bar{\beta}_iu\ \ \mbox{
and }\ \ \Phi(t,u,i,y):=\bar{\gamma}_i(y)u.
\end{equation*}
It is easy to compute $\alpha_i,\beta_i$ and $\delta_i$ in Theorem
\ref{asymptotic} as follows
\begin{equation*}
\alpha_i=-2\lambda_1+2\bar{\alpha}_i+\bar{\beta}^2_i+\int_0^\infty\bar{\gamma}_i^2(y)\lambda(dy),\
\ \  \beta_i=4\bar{\beta}_i^2,
\end{equation*}
and
\begin{equation*}
\delta_i=\int_0^\infty[2(\bar{\gamma}_i(y)-\ln(1+\bar{\gamma}_i(y)))+\bar{\gamma}_i^2(y)]\lambda(dy).
\end{equation*}
Moreover, noting that
\begin{equation*}
\Lambda(t,u,i,y)=(1+\bar{\gamma}_i(y))^2 \mbox{ and } \Upsilon(t, u,
r_0, i,\ell)=1,
\end{equation*}
together with  \eqref{eq5} and \eqref{eq6}, we can deduce that  (iv)
and (vi) hold. Then the desired assertion follows by Theorem
\ref{asymptotic}.
\end{proof}

We now discuss  the sample path stability of the solution to Eq.
\eqref{eq2} using its explicit mild solution, which will be
 given in Lemma \ref{explicit solution} below. In the sequel, when $u^0$ is
deterministic and $u^0\not= 0$, we set $n_0:=\inf\{n:u^0_n\neq 0\}$,
where $u_n^0:=\langle u^0,e_n\rangle$ for $n\geq1$.
\begin{lemma}\label{explicit solution}
{\rm Under  \eqref{eq5},  the unique global mild solution of Eq.
\eqref{eq2} has the explicit form
\begin{equation}\label{eq3}
\begin{split}
X(t,x)&=v(t,x)\exp\Big\{-\frac{1}{2}\int_0^t\bar{\beta}^2(r(s))ds+\int_0^t\int_0^\infty[\ln(1+\bar{\gamma}(r(s),y))-\bar{\gamma}(r(s),y)]\lambda(dy)ds\\
&\quad+\int_0^t\bar{\beta}(r(s))dW(s)+\int_0^t\int_0^\infty\ln(1+\bar{\gamma}(r(s),y))\tilde{N}(ds,dy)\Big\},
\end{split}
\end{equation}
where
\begin{equation}\label{eq4}
v(t,x):=\sum\limits_{n=1}^{\infty}\exp\left\{-\lambda_nt+\int_0^t\bar{\alpha}(r(s))ds\right\}u_n^0
e_n(x), \ \ \ t\geq0, \ \ x\in\mathcal {O}.
\end{equation}
}
\end{lemma}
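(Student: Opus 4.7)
My approach is to exhibit the candidate $Z(t,x):=v(t,x)Y(t)$, where $v$ is given by \eqref{eq4} and $Y(t)$ denotes the scalar exponential factor on the right of \eqref{eq3}; verify that $Z$ is a mild solution of \eqref{eq2}; and invoke uniqueness in Lemma \ref{global existence} to conclude $X=Z$.

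First, I would analyze the scalar factor $Y$. Applying It\^o's formula to the Dol\'eans-Dade exponent (see, e.g., \cite[Theorem 5.2.7, p290]{a09}) gives $dY(t)=\bar\beta(r(t))Y(t)\,dW(t)+\int_0^\infty\bar\gamma(r(t),y)Y(t^-)\,\tilde N(dt,dy)$ with $Y(0)=1$. Multiplying by the absolutely continuous scalar factor $e^{\int_0^t\bar\alpha(r(s))ds}$ and applying the product rule shows that $\Psi(t):=e^{\int_0^t\bar\alpha(r(s))ds}Y(t)$ solves
\begin{equation*}
d\Psi(t)=\bar\alpha(r(t))\Psi(t)\,dt+\bar\beta(r(t))\Psi(t)\,dW(t)+\int_0^\infty\bar\gamma(r(t),y)\Psi(t^-)\,\tilde N(dt,dy),\quad \Psi(0)=1.
\end{equation*}

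The structural observation for the main step is that the coefficients of \eqref{eq2} act on $X$ as pointwise scalar multiplications, so the scalar $Y(s)$ commutes with $S(t-s)$; moreover, from \eqref{eq00} and \eqref{eq4}, $v(s)=e^{\int_0^s\bar\alpha(r(u))du}S(s)u^0$, and the semigroup law $S(t-s)S(s)=S(t)$ yields
\begin{equation*}
S(t-s)Z(s)=Y(s)\,S(t-s)v(s)=\Psi(s)\,S(t)u^0,\qquad 0\le s\le t.
\end{equation*}
Substituting this into the mild formulation of \eqref{eq2} and pulling the deterministic vector $S(t)u^0$ out of every integral gives
\begin{equation*}
\begin{split}
&S(t)u^0+\int_0^t S(t-s)\bar\alpha(r(s))Z(s)\,ds+\int_0^t S(t-s)\bar\beta(r(s))Z(s)\,dW(s)\\
&\qquad+\int_0^t\!\!\int_0^\infty S(t-s)\bar\gamma(r(s),y)Z(s^-)\,\tilde N(ds,dy)\\
&=S(t)u^0\Bigl[1+\int_0^t\bar\alpha(r(s))\Psi(s)\,ds+\int_0^t\bar\beta(r(s))\Psi(s)\,dW(s)\\
&\qquad+\int_0^t\!\!\int_0^\infty\bar\gamma(r(s),y)\Psi(s^-)\,\tilde N(ds,dy)\Bigr],
\end{split}
\end{equation*}
and the bracketed quantity equals $\Psi(t)$ by the scalar SDE above. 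Thus the right-hand side reduces to $S(t)u^0\,\Psi(t)=v(t)Y(t)=Z(t)$, verifying that $Z$ is a mild solution; uniqueness in Lemma \ref{global existence} then forces $X=Z$, which is the claim.

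\textbf{Main obstacle.} The only delicate issue is justifying the manipulations: the commutation of the scalar $\Psi(s)$ with $S(t-s)$ and the stochastic Fubini required to pull the deterministic vector $S(t)u^0$ out of the Wiener and Poisson integrals. Both demand moment control of $Y$, which follows routinely from \eqref{eq5}-\eqref{eq6}: the former guarantees that $Y$ has finite $L^p$-moments locally in time for every $p\ge1$, while the latter ensures that $\ln(1+\bar\gamma_i(y))$ is square-integrable against $\lambda$, so the exponent defining $Y$ is a well-defined L\'evy-type semimartingale and Lemma \ref{B-D-G} applies to the jump integrals in question.
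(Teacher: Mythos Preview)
Your proof is correct and takes a genuinely different route from the paper's. The paper proceeds by projecting the mild equation \eqref{eq34} onto each eigenfunction $e_n$, obtaining a scalar linear SDE for $e^{\lambda_n t}\langle X(t),e_n\rangle_H$, solving each one via It\^o's formula to get the Dol\'eans--Dade exponential, and then reassembling $X(t)=\sum_n\langle X(t),e_n\rangle_H e_n$. You instead work at the Hilbert-space level: you recognise that the coefficients $\bar\alpha,\bar\beta,\bar\gamma$ act as scalar multiplications and hence commute with the semigroup, write $v(s)=e^{\int_0^s\bar\alpha}S(s)u^0$, use $S(t-s)S(s)=S(t)$ to collapse everything to $S(t)u^0$ times a single scalar process $\Psi$, and then appeal to the scalar SDE for $\Psi$ and uniqueness. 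Your approach is a bit slicker and in fact does not use the spectral decomposition at all---it would go through for any $C_0$-semigroup with scalar multiplicative coefficients---whereas the paper's argument is tied to the eigenbasis but makes every step completely explicit and avoids the (admittedly routine) step of factoring a fixed $H$-vector out of the Wiener and Poisson integrals. One minor remark: the lemma is stated under \eqref{eq5} alone, and that already suffices, since $\int\bar\gamma_i^2\lambda(dy)<\infty$ together with $\bar\gamma_i>-1$ forces both $\int(\ln(1+\bar\gamma_i))^2\lambda(dy)<\infty$ near the origin and $\lambda\{|\bar\gamma_i|>1/2\}<\infty$; your invocation of \eqref{eq6} in the obstacle paragraph is therefore not needed, though it does no harm.
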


\begin{proof}
Under  \eqref{eq5}, Eq. \eqref{eq2} has a unique global mild
solution such that
\begin{equation}\label{eq34}
\begin{split}
X(t)&=S(t)u^0+\int_0^t\bar{\alpha}(r(s))S(t-s)
X(s))ds+\int_0^t\bar{\beta}(r(s))S(t-s) X(s))dW(s)\\
&\quad+\int_0^t\int_0^\infty\bar{\gamma}(r(s),y)S(t-s)X(s^-)\tilde{N}(ds,dy).
\end{split}
\end{equation}
 Since
\begin{equation*}
S(t)u=\sum_{n=1}^\infty e^{-\lambda_nt}\langle u,e_n\rangle_He_n
\mbox{ for }u\in H,
\end{equation*}
Eq. \eqref{eq34} can be rewritten in the form
\begin{equation*}
\begin{split}
X(t)&=\sum_{n=1}^\infty e^{-\lambda_nt}\langle u^0,e_n\rangle_He_n+\int_0^t\bar{\alpha}(r(s))\sum_{n=1}^\infty e^{-\lambda_n(t-s)}\langle X(s),e_n\rangle_He_nds\\
&\quad+\int_0^t\bar{\beta}(r(s))\sum_{n=1}^\infty e^{-\lambda_n(t-s)}\langle X(s),e_n\rangle_He_ndW(s)\\
&\quad+\int_0^t\int_0^\infty\bar{\gamma}(r(s),y)\sum_{n=1}^\infty
e^{-\lambda_n(t-s)}\langle X(s^-),e_n\rangle_He_n\tilde{N}(ds,dy).
\end{split}
\end{equation*}
This  further yields
\begin{equation*}
\begin{split}
e^{\lambda_nt}\langle X(t),e_n\rangle_H&=\langle u^0,e_n\rangle_H+\int_0^t\bar{\alpha}(r(s)) e^{\lambda_ns}\langle X(s),e_n\rangle_Hds\\
&\quad+\int_0^t\bar{\beta}(r(s)) e^{\lambda_ns}\langle X(s),e_n\rangle_HdW(s)\\
&\quad+\int_0^t\int_0^\infty\bar{\gamma}(r(s),y)
e^{\lambda_ns}\langle X(s^-),e_n\rangle_H\tilde{N}(ds,dy).
\end{split}
\end{equation*}
Then It\^o's formula gives
\begin{equation*}
\begin{split}
\langle X(t),e_n\rangle_H&=\langle
u^0,e_n\rangle_H\exp\Big\{-\lambda_nt+\int_0^t\Big[\bar{\alpha}(r(s))-\frac{1}{2}\bar{\beta}^2(r(s))\Big]ds\\
&\quad+\int_0^t\int_0^\infty[\ln(1+\bar{\gamma}(r(s),y))-\bar{\gamma}(r(s),y)]\lambda(dy)ds\\
&\quad+\int_0^t\bar{\beta}(r(s))dW(s)+\int_0^t\int_0^\infty\ln(1+\bar{\gamma}(r(s),y))\tilde{N}(ds,dy)\Big\}.
\end{split}
\end{equation*}
The desired assertion therefore follows by observing that
\begin{equation*}
X(t)=\sum_{n=1}^\infty \langle X(t),e_n\rangle_He_n.
\end{equation*}
\end{proof}

\begin{theorem}\label{direct method}
{\rm Under the conditions of Theorem \ref{linear}, Eq. \eqref{eq2} has the following properties:\\

\begin{enumerate}
\item[\textmd{(i)}] If $u^0$ is deterministic and $u^0\not= 0$, then
\begin{equation*}
\begin{split}
\lim\limits_{t\rightarrow\infty}\frac{1}{t}\ln(\|X(t)\|_H)&=-\lambda_{n_0}+\sum\limits_{j=1}^{m}\pi_j\bigg(\bar{\alpha}_j-\frac{1}{2}\bar{\beta}_j^2\\
&\quad\ \
+\int_0^\infty[\ln(1+\bar{\gamma}_i(y))-\bar{\gamma}_i(y)]\lambda(dy)\bigg)
\quad a.s.
\end{split}
\end{equation*}
In particular, the solution of Eq. \eqref{eq2} with initial
condition $u^0$ will converge exponentially to zero  with
probability one if and only if
\begin{equation*}
\lambda_{n_0}>\sum\limits_{j=1}^{m}\pi_j\left(\bar{\alpha}_j-\frac{1}{2}\bar{\beta}_j^2+\int_0^\infty[\ln(1+\bar{\gamma}_i(y))-\bar{\gamma}_i(y)]\lambda(dy)\right).
\end{equation*}
\item[\textmd{(ii)}] For any initial condition $u^0$,
\begin{equation*}
\begin{split}
\limsup\limits_{t\rightarrow\infty}\frac{1}{t}\ln(\|X(t)\|_H)&\leq-\lambda_1+\sum\limits_{j=1}^m\pi_j\bigg(\bar{\alpha}_j-\frac{1}{2}\bar{\beta}_j^2\\
&\quad\
+\int_0^\infty[\ln(1+\bar{\gamma}_i(y))-\bar{\gamma}_i(y)]\lambda(dy)\bigg)
\quad a.s.
\end{split}
\end{equation*}
In particular, the solution  of Eq. \eqref{eq2} is almost surely
exponentially stable if
\begin{equation*}
\lambda_1>\sum\limits_{j=1}^{m}\pi_j\left(\bar{\alpha}_j-\frac{1}{2}\bar{\beta}_j^2+\int_0^\infty[\ln(1+\bar{\gamma}_i(y))-\bar{\gamma}_i(y)]\lambda(dy)\right).
\end{equation*}
\end{enumerate}}
\end{theorem}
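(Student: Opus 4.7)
The plan is to read off the asymptotics directly from the explicit mild solution in Lemma \ref{explicit solution}. Writing $X(t,x)=v(t,x)e^{M(t)}$, where $M(t)$ collects the four exponent terms appearing in \eqref{eq3} (all real-valued since $1+\bar\gamma_i(y)>0$ by \eqref{eq5}), the spatial and Markovian-stochastic contributions decouple:
\begin{equation*}
\ln\|X(t)\|_H=\ln\|v(t)\|_H+M(t).
\end{equation*}
Parseval's identity in the orthonormal basis $\{e_n\}$ of \eqref{eq00} gives
\begin{equation*}
\|v(t)\|_H^2=\exp\!\Bigl\{2\!\int_0^t\!\bar\alpha(r(s))\,ds\Bigr\}\sum_{n\geq 1}e^{-2\lambda_n t}(u_n^0)^2,
\end{equation*}
so it suffices to treat the scalar series and the random exponent separately.

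For part (i), I factor out the slowest-decaying mode,
\begin{equation*}
\sum_{n\geq1}e^{-2\lambda_n t}(u_n^0)^2=e^{-2\lambda_{n_0}t}\Bigl[(u_{n_0}^0)^2+\sum_{n>n_0}e^{-2(\lambda_n-\lambda_{n_0})t}(u_n^0)^2\Bigr],
\end{equation*}
and note that by monotone convergence the bracket tends to $(u_{n_0}^0)^2>0$ as $t\to\infty$, so $\frac{1}{2t}\ln\sum_n e^{-2\lambda_n t}(u_n^0)^2\to-\lambda_{n_0}$. For part (ii), the crude estimate $e^{-2\lambda_n t}\leq e^{-2\lambda_1 t}$ together with $\sum_n(u_n^0)^2=\|u^0\|_H^2<\infty$ a.s.\ yields the $-\lambda_1$ upper bound. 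In both cases the Markovian ergodic theorem (\cite[Theorem 3.8.1]{n97}) gives $\frac{1}{t}\int_0^t\bar\alpha(r(s))ds\to\sum_j\pi_j\bar\alpha_j$ almost surely.

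I would next split $M(t)/t$ into its four summands. The two ordinary time integrals $\int_0^t\bar\beta^2(r(s))ds$ and $\int_0^t\!\int_0^\infty[\ln(1+\bar\gamma(r(s),y))-\bar\gamma(r(s),y)]\lambda(dy)ds$ are pathwise bounded functionals of $r(\cdot)$ and so converge, after division by $t$, to their $\pi$-averages. The Wiener integral $\int_0^t\bar\beta(r(s))dW(s)$ is a continuous martingale whose quadratic variation is bounded by $(\max_j\bar\beta_j^2)\,t$, hence the strong law for continuous local martingales forces $\frac{1}{t}\int_0^t\bar\beta(r(s))dW(s)\to 0$ a.s. The remaining term, $\int_0^t\!\int_0^\infty\ln(1+\bar\gamma(r(s),y))\tilde N(ds,dy)$, is a purely discontinuous $L^2$-martingale whose predictable quadratic variation is at most $t\cdot\max_j\int_0^\infty[\ln(1+\bar\gamma_j(y))]^2\lambda(dy)<\infty$ thanks to \eqref{eq6}.

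The main obstacle is showing that this last compensated Poisson integral, divided by $t$, vanishes almost surely: path-dependence through $r(\cdot)$ prevents a direct appeal to an $L^2$ strong law. I would replicate the Borel--Cantelli/exponential-martingale-inequality trick already used in the proof of Theorem \ref{asymptotic}: applying the exponential inequality with jumps of \cite[Theorem 5.2.9]{a09} on windows $T=n$ with parameters $\nu=2\theta^{-1}\ln n$ for small $\theta>0$, Borel--Cantelli produces, for almost every $\omega$ and all large $n$, an upper bound of order $\log n$ plus a time integral of $\theta^{-1}[(1+\bar\gamma)^{2\theta}-1-2\theta\ln(1+\bar\gamma)]$; Taylor expansion in $\theta$ combined with \eqref{eq6} shows this last integral grows at most linearly in $t$ with a coefficient of order $\theta$, so dividing by $t$ and letting $\theta\downarrow 0$ yields the claim. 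Combining all the limits gives the exact equality in (i) and the upper bound in (ii); the ``in particular'' iff statements then follow trivially from the sign of the limiting Lyapunov exponent.
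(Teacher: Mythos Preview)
Your overall strategy---decompose via the explicit solution of Lemma \ref{explicit solution}, handle $\|v(t)\|_H$ by Parseval and factoring out the leading mode, and treat the remaining exponent terms by ergodicity and martingale laws---is exactly the paper's approach. The one divergence is in your treatment of the compensated Poisson integral $M_2(t):=\int_0^t\int_0^\infty\ln(1+\bar\gamma(r(s),y))\,\tilde N(ds,dy)$: you assert that path-dependence through $r(\cdot)$ blocks a direct strong law and therefore propose the exponential-inequality/Borel--Cantelli machinery from Theorem \ref{asymptotic}. This detour is unnecessary. The paper simply invokes Lipster's strong law of large numbers for local martingales \cite{l80}, which applies to any locally square-integrable (c\`adl\`ag) martingale and requires only that the predictable quadratic variation grow at most linearly---precisely the bound $\langle M_2\rangle_t\le t\max_j\int_0^\infty(\ln(1+\bar\gamma_j(y)))^2\lambda(dy)$ you already derived from \eqref{eq6}. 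Path-dependence of the integrand is irrelevant to that result. Your alternative can be made to work, but as written the exponential inequality gives only the one-sided estimate $\limsup_{t\to\infty}M_2(t)/t\le 0$; for the exact limit in part (i) you would also have to run the argument with $-M_2$ (checking integrability of $(1+\bar\gamma)^{-2\theta}$), a complication the paper's direct appeal to \cite{l80} avoids entirely.
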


\begin{proof}
Note that
\begin{equation*}
\begin{split}
\frac{1}{t}\ln(\|X(t)\|_H)&=\frac{1}{t}\ln(\|v(t)\|_H)-\frac{1}{2t}\int_0^t\bar{\beta}^2(r(s))ds\\
&\quad+\frac{1}{t}\int_0^t\int_0^\infty[\ln(1+\bar{\gamma}(r(s),y))-\bar{\gamma}(r(s),y)]\lambda(dy)ds\\
&\quad+\frac{1}{t}M_1(t)+\frac{1}{t}M_2(t),
\end{split}
\end{equation*}
where
\begin{equation*}
M_1(t):=\int_0^t\bar{\beta}(r(s))dW(s) \mbox{ and }
M_2(t):=\int_0^t\int_0^\infty\ln(1+\bar{\gamma}(r(s),y))\tilde{N}(ds,dy).
\end{equation*}
Since
\begin{equation*}
\langle M_1\rangle_t\leq t\max_{1\leq i\leq m}\beta_i^2 \mbox{ and
}\langle M_2\rangle_t\leq t\max_{1\leq i\leq
m}\int_0^\infty(\ln(1+\bar{\gamma}_i(y)))^2\lambda(dy),
\end{equation*}
together with the strong law of large numbers for local martingales,
e.g., Lipster \cite{l80}, we  have
\begin{equation*}
\lim\limits_{t\rightarrow\infty}\frac{1}{t}M_1(t)=\lim\limits_{t\rightarrow\infty}\frac{1}{t}M_2(t)=0\
\ \mbox{ a.s. }
\end{equation*}
Furthermore,  by the ergodic property of Markov chains, e.g.,
\cite[Theorem 3.8.1, p126]{n97}
\begin{equation*}
\begin{split}
&-\frac{1}{2t}\int_0^t\bar{\beta}^2(r(s))ds
+\frac{1}{t}\int_0^t\int_0^\infty[\ln(1+\bar{\gamma}(r(s),y))-\bar{\gamma}(r(s),y)]\lambda(dy)ds\\
&\rightarrow\sum\limits_{i=1}^m\pi_i\left(-\frac{1}{2}\bar{\beta}_i^2+\int_0^\infty[\ln(1+\bar{\gamma}_i(y))-\bar{\gamma}_i(y)]\lambda(dy)\right)
\ \ \mbox{ a.s. }
\end{split}
\end{equation*}
whenever  $t\rightarrow\infty$. Moreover, it is not difficult to
show that
\begin{equation*}
\lim_{t \rightarrow \infty}\frac{1}{t}\ln(\|v(t)\|_H)
\begin{cases}
&=-\lambda_{n_0}+\sum_{j=1}^m\pi_j \bar \alpha_j, \mbox{ if } u^{0}\neq 0 \mbox{ is deterministic },\\
&\le \lambda_{1}+\sum_{j=1}^m\pi_j \bar \alpha_j, \mbox{ for any
initial condition  } u^{0}.
\end{cases}
\end{equation*}
The proof is therefore complete.
\end{proof}

\begin{example}\label{example}
{\rm Let $r(t)$ be a right-continuous Markov chain taking values in
$\mathbb{S}=\{1,2\}$ with the generator
$\Gamma=(q_{ij})_{2\times2}$:
\begin{equation*}
-q_{11}=q_{12}=\nu>0,\ \ \ \ \ -q_{22}=q_{21}=q>0.
\end{equation*}
Then the unique stationary probability distribution of the Markov
chain $r(t)$ is
\begin{equation*}
\pi=(\pi_1,\pi_2)=(q/(\nu+q),\nu/(\nu+q)).
\end{equation*}
For $i=1,2$ set
\begin{equation*}
\mu_i:=\int_0^\infty[\ln(1+\bar{\gamma}_i(y))-\bar{\gamma}_i(y)]\lambda(dy).
\end{equation*}
Let $\bar{\alpha}_1,\bar{\alpha}_2\in\mathbb{R}$ such that
\begin{equation}\label{eq9}
\bar{\alpha}_1+\mu_1>1, \ \ \bar{\alpha}_2+\mu_2<1,
\end{equation}
and choose $q$ obeying
\begin{equation}\label{eq10}
0<q<\nu(1-\bar{\alpha}_2-\mu_2)/(\bar{\alpha}_1+\mu_1-1).
\end{equation}
Consider the switching-diffusion equation with jumps:
\begin{equation}\label{eq8}
\begin{cases}
\ \ dX(t)=(A X(t)+\bar{\alpha}(r(t)) X(t))dt\\
\ \ \ \ \ \ \ \ \
\quad+\int_0^\infty\bar{\gamma}(r(t),y)X(t^-)\tilde{N}(dt,dy),
  t>0, x\in(0,\pi),\\
\ X(t,0)=X(t,\pi)=0,   \ \ t>0,\\
X(0,x)=u^0(x)=\sqrt{2/\pi}\sin x,\ \ \ \ x\in(0,\pi).
\end{cases}
\end{equation}
The previous stochastic system  can be regarded as the combination
of two equations
\begin{equation}\label{eq16}
dX(t)=(A X(t)+\bar{\alpha}_1
X(t))dt+\int_0^\infty\bar{\gamma}_1(y)X(t^-)\tilde{N}(dt,dy),\
t\geq0, x\in(0,\pi),
\end{equation}
and
\begin{equation}\label{eq0}
dX(t)=(A X(t)+\bar{\alpha}_2
X(t))dt+\int_0^\infty\bar{\gamma}_2(y)X(t^-)\tilde{N}(dt,dy),\
t\geq0, x\in(0,\pi)
\end{equation}
with the same Dirichlet boundary condition and initial condition as
Eq. \eqref{eq8}, switching from one to the other according to the
law of the Markov chain. Recalling that $e_n(x)=\sqrt{2/\pi}\sin nx,
n=1,2,3,\ldots,$ are the eigenfunctions of $-A$, with positive and
increasing eigenvalues $\lambda_n=n^2$, we hence have $\lambda_1=1$.
By Theorem \ref{linear}, we have
\begin{equation*}
\limsup\limits_{t\rightarrow\infty}\frac{1}{t}\ln(\|X(t)\|_H)\leq-1+\pi_1\bar{\alpha}_1+\pi_2\bar{\alpha}_2+\pi_1\mu_1+\pi_2\mu_2
\quad a.s.
\end{equation*}
This, together with \eqref{eq9} and \eqref{eq10}, yields that Eq.
\eqref{eq8} is almost surely exponentially stable. On the other
hand,
 note that the initial condition $u^0(x)=\sqrt{2/\pi}\sin x$ is deterministic and
$u_1^0=1$, which implies $n_0=1$. By Theorem \ref{direct method},
the solution to Eq.\eqref{eq16} has the property
\begin{equation*}
\lim\limits_{t\rightarrow\infty}\frac{1}{t}\ln(\|X(t)\|_H)=-1+\bar{\alpha}_1+\mu_1,
  \ \ \ \mbox{ a.s.,}
\end{equation*}
and the solution to Eq.\eqref{eq0}  has the property
\begin{equation*}
\lim\limits_{t\rightarrow\infty}\frac{1}{t}\ln(\|X(t)\|_H)=-1+\bar{\alpha}_2+\mu_2,\
\ \  \mbox{ a.s.}
\end{equation*}
Then, by \eqref{eq9} the solution of  stochastic system \eqref{eq16}
explodes exponentially, and the solution of stochastic system
\eqref{eq0} converges exponentially to zero. However, we observe
that due to the Markovian switching the overall system \eqref{eq8}
is almost surely exponentially stable. }
\end{example}

\noindent{\bf{Acknowledge }} The authors wish to express their
sincere thanks to the anonymous referee for his/her careful comments
and valuable suggestions, which greatly improved the paper. The
authors also would like to thank Dr Andrew Neate for his comments.

\bibliographystyle{amsplain}

\end{document}